\documentclass[a4paper]{amsart}

\usepackage{amsmath, amsfonts, amsthm , amssymb, amscd}
\usepackage{graphicx}
\usepackage{pst-grad} 

\usepackage{url}
\usepackage{pstricks}
\usepackage{xcolor}

 \newtheorem{thm}{Theorem}[section]
 \newtheorem{prop}[thm]{Proposition}
 \newtheorem{lem}[thm]{Lemma}
 \newtheorem{cor}[thm]{Corollary}

\theoremstyle{definition} 
 \newtheorem{dfn}{Definition}[section]
 \newtheorem{exm}[thm]{Example}
 \newtheorem{rem}[thm]{Remark}
\numberwithin{equation}{section}

\title[Chromatic numbers for facet colouring of generalised associahedra]{Chromatic numbers for facet colouring of some
generalised associahedra}
\author{Dj.\ Barali\' c, J.\ Ivanovi\' c and Z.\ Petri\' c}
\address{ \scriptsize{Mathematical Institute SANU\\ Knez Mihailova 36, p.f.\ 367\\
        11001 Belgrade, Serbia}}
\email{djbaralic@mi.sanu.ac.rs, zpetric@mi.sanu.ac.rs}
\address{\scriptsize{University of Belgrade, Faculty of Architecture\\ Bulevar Kralja Aleksandra 73/II\\ 11000 Belgrade,
Serbia}}
\email{jelena.ivanovic@arh.bg.ac.rs}

\date{}

\allowdisplaybreaks
\begin{document}

\begin{abstract} The chromatic number related to a colouring of facets of certain classes of generalised associahedra is studied. The exact values are obtained for permutohedra, associahedra and simple permutoassociahedra, while lower and upper bounds are established for cyclohedra and stellohedra. The asymptotic values of the chromatic numbers for associahedra, cyclohedra and simple permutoassociahedra are given.

\vspace{.3cm}

\noindent {\small {\it Mathematics Subject Classification} ({\it
        2020}): 52B05, 52B12, 05C15
}

\vspace{.5ex}

\noindent {\small {\it Keywords$\,$}:  permutohedron, associahedron, cyclohedron, stellohedron, simple permutoassociahedron }
\end{abstract}

\maketitle

\section{Introduction}

James Stasheff, in his remarkable paper \cite{S63} from 1963, brought to the scientific community's attention a simple polytope (in dimension $n$, every vertex is incident with exactly $n$ facets) with outstanding combinatorial properties, today known as \textit{the associahedron} or \textit{Stasheff polytope}. The fact that this polytope was constructed by Dov Tamari in 1951 (see \cite{T51}) was pointed out by Stasheff himself in \cite{S12}. In the 1990s, several other constructions of series of polytopes under the common name ``generalised associahedra'' appeared in the literature, finding numerous applications in algebraic geometry \cite{GKZ94}, the theory of operads \cite{S97}, cluster algebras \cite{FZ02} and knot theory \cite{BT94}.

An essential class of generalised associahedra is the class of nestohedra. It could be introduced either geometrically by using a Minkowski sum of simplices, or combinatorially by relying on concepts of nested sets and building sets. Feichtner and Kozlov, \cite{FK04}, defined the notions of
building sets and nested sets in a pure combinatorial manner. Feichtner and Sturmfels, \cite{FS05}, and Postnikov, \cite{P09}, used these concepts to describe the face lattices of a family of simple polytopes named nestohedra in \cite{PRW08}. An inductive approach that leads to the same family of polytopes is given in \cite{DP10c}, and Carr and Devadoss used graphs in \cite{CD06} to define a related family of polytopes. A possibility to iterate this construction and to extend this family is given in \cite{P14} (see also \cite{BIP17} and  \cite{I20}). All these families of simple polytopes lie at the crossroads of algebra, combinatorics, geometry, logic, topology, and other fields of mathematics and physics.

In the paper, we study the chromatic number of some nestohedra together with the simple permutoassociahedron concerning proper colourings of their facets, i.e. colourings such that two facets sharing a vertex have distinct colours.
It should be noted that
the colouring of the vertices of graphs is the most studied one in the literature; for example, the chromatic number of proper colourings of vertices of the associahedron was studied in \cite{MPHHUW09} and \cite{ARSW18}.
Our definition of proper coloring applied to a simple polytope $P$ is equivalent to the standard proper colouring of the vertices of its dual polytope, which is simplicial (see e.g.\ \cite{Zig} for more details). However, we prefer to work with graph associahedra, instead of their dual polytopes, as the latter are much less known objects in mathematics.

Our investigation of colourings of the facets of simple polytopes in the families listed above is motivated by the fact that they are all Delzant polytopes (see \cite{Z06}, \cite{P09} and \cite{P14}). Recall that a polytope is called Delzant if, for each of its vertices, the outward normal vectors of the facets containing it can be chosen to make up an integral basis for $\mathbb{Z}^n$. The normal fan of a Delzant polytope is a complete non-singular fan, thus defining a projective toric manifold and a real toric manifold by the fundamental theorem. This fact opened the possibility of calculating some of their combinatorial invariants, such as their $\mathbf{h}$-vectors and the Betti numbers of the corresponding toric manifolds (see \cite{CP15} and \cite{CPP17}). However, a projective toric manifold is also a quasitoric manifold, while a real toric manifold is a small cover, a topological generalisation introduced by Davis and Januszkiewicz in \cite{DJ91}. This remarkable paper  traced the foundation of toric topology as a new mathematical discipline at the junction of equivariant topology, algebraic geometry, combinatorics and commutative algebra. Toric topology has found its applications in the chemistry of fullerenes \cite{BE17} and topological data analysis \cite{LU23}. The main objects of study here are topological spaces and manifolds with torus actions defined in combinatorial terms.

Among the most investigated and best-understood objects in toric topology are quasitoric manifolds. From a topological point of view, a quasitoric manifold $M^{2n}$ is a smooth closed $2n$-dimensional manifold with a smooth locally standard action of torus $T^n$ such that its orbit space is a simple convex $n$-polytope $P^n$ regarded as a manifold with corners. However, its combinatorial description provides further insights as it introduces a quasitoric manifold as a characteristic pair $(P^n, \Lambda)$ where $P^n$ is a simple polytope and $\Lambda$ is the characteristic map that assigns a vector of $\mathbb{Z}^n$ to each facet of $P^n$ so that for each its vertex, the assigned vectors of the facets containing it span an integral basis for $\mathbb{Z}^n$. A detailed exposition on the construction of a quasitoric manifold from a characteristic pair and the equivalence of topological and combinatorial definition may be found in \cite[Construction~5.12]{BP02}. The main problem in toric topology is the classification problem of quasitoric manifolds, which asks whether a given polytope $P^n$ appears as the orbit space of a quasitoric manifold and, if it appears, to determine all such quasitoric manifolds. Up to the  action of the symmetry group of combinatorial polytope $P^n$ and the general linear group $GL(n)$, the question reduces to describing all characteristic maps on the facets of $P^n$. The chromatic number for proper colouring of the facets of a simple $n$-polytope is the only known obstruction to the existence of a characteristic map on $P^n$; it is always greater than $n$ and less or equal to $2^n-1$ \cite[Example~3.3.4]{BP15}. However, it is unknown whether the upper bound in this inequality is sharp or not. In the literature, to the best of our knowledge, the families of polytopes admitting a characteristic map have small or linear values of the chromatic number \cite{BM22}.

The number of vectors of $\mathbb{Z}^n$ in the image of a characteristic map on $P^n$ is an upper bound for the chromatic number of a proper facet colouring of $P^n$. However, the characteristic map associated with projective toric manifolds arising from Delzant realizations of the permutohedron, the associahedron, the cyclohedron, the stellohedron, and the permutoassociahedron assign distinct vectors to distinct facets so they produce the coloring with the maximum number of colours. This fact motivated us to study the natural question of determining  their chromatic numbers as the first step to classifying the quasitoric manifolds over the nestohedra. The deep connection between topology and combinatorics of quasitoric manifolds has exciting applications in binary coding \cite{CLY18}. The results obtained in the paper for the associahedron, the cyclohedron, the stellohedron and the permutoassociahedron are the first to point out classes of simple polytopes admitting a characteristic map where the chromatic number is not a linear function of the dimension.

\section{Preliminaries}

Throughout the text, the cardinality of a set $X$ is denoted by $|X|$. For $n\geq 1$, the sets $\{1,\ldots,n\}$ and $\{0,1,\ldots,n\}$ are denoted by $[n]$ and $[n]_0$, respectively. The subset relation is denoted by $\subseteq$, while the \emph{proper} subset is denoted by $\subset$. Also, by \emph{comparability} of sets, we mean the comparability with respect to $\subseteq$.

There are several definitions of building and nested sets, and we recall one stemming from~\cite{FS05}.

\begin{dfn}\label{d1}
A collection $\mathcal{B}$ of nonempty subsets of $[n]_0$ containing all singletons $\{x\}, x\in [n]_0$, and satisfying that
for any two sets $S_1, S_2\in \mathcal{B}$ such that $S_1\cap S_2
\neq\emptyset$, their union $S_1\cup S_2$ also belongs to
$\mathcal{B}$, is called a \textit{building set}.

For a family of sets $N$, we say that $\{X_1,\ldots,X_m\}\subseteq
N$ is an $N$-\emph{antichain}, when $m\geq 2$ and $X_1,\ldots,X_m$
are mutually incomparable. Let $\mathcal{B}$ be a building set such that $[n]_0\in \mathcal{B}$. We say that $N\subseteq \mathcal{B}$ is a \emph{nested set} when the union of every $N$-antichain is not an element of $\mathcal{B}$.

A subset of a nested set is again a nested set, hence the nested sets with respect to $\mathcal{B}$ make a simplicial complex. The link of $[n]_0$ in this complex (i.e.\ the set of all nested sets $N$ such that $[n]_0\not\in N$) is the \emph{nested-set complex} of $\mathcal{B}$.

We say that a polytope $P$ (\emph{geometrically}) \emph{realises} a simplicial complex $K$, when the semilattice obtained by removing the bottom (the empty set) from the face lattice of $P$ is
isomorphic to $(K,\supseteq)$. Note that $P$ must be simple, and for every $\mathcal{B}$, there is a polytope, called \emph{nestohedron}, that geometrically realises the corresponding nested-set complex (see \cite[Theorem~7.4]{P09}, or \cite[Theorem~9.10]{DP10c}).
\end{dfn}

Consider the following four graphs with $[n]_0$ as the set of
vertices: the complete graph on $[n]_0$, the path $0-\ldots-n$,
the cycle $0-\ldots-n-0$ and the star with $n$ edges connecting
the vertex $0$ with all the other vertices. For every such graph
$\Gamma$, the set of all nonempty and
connected subsets of vertices in $\Gamma$ make a building set. Each of these building sets gives rise to a nested-set complex,
which can be realised as an $n$-dimensional simple polytope. In
this way the $n$-dimensional \emph{permutohedron},
\emph{associahedron}, \emph{cyclohedron} and \emph{stellohedron}
(we prefer its entirely Greek name \emph{astrohedron}), correspond to
the complete graph on $[n]_0$, the path $0-\ldots-n$, the cycle
$0-\ldots-n-0$ and the star with $n$ edges, respectively.

\begin{exm}
The 3-dimensional associahedron corresponds to the path graph $0-1-2-3$, and its building set $\mathcal{B}$ is the following:
\[
\{\{0\},\{1\},\{2\},\{3\},\{0,1\},\{1,2\},\{2,3\},\{0,1,2\},\{1,2,3\}, \{0,1,2,3\}\}.
\]
Note that $\{\{1,2\},\{2,3\}\}$ is an $N$-antichain for $N=\{\{1\},\{1,2\},\{2,3\}\}$, but $N$ is not a nested set since $\{1,2\}\cup \{2,3\}=\{1,2,3\}\in \mathcal{B}$. On the other hand, $\{\{1\},\{0,1\},\{3\}\}$ is a nested set since neither $\{1,3\}$, nor $\{0,1,3\}$ belongs to $\mathcal{B}$. For $X=\{0,1,2\}$, the nested set $\{X\}$ corresponds to a facet of this polytope. This facet is a pentagon whose edges correspond to the nested sets
\[
\{\{0\},X\}, \{\{0,1\},X\},\{\{1\},X\}, \{\{1,2\},X\}, \{\{2\},X\},
\]
and whose vertices correspond to the nested sets
\[
\{\{0\},\{0,1\},X\}, \{\{0,1\},\{1\},X\}, \{\{1\},\{1,2\},X\},\{\{1,2\},\{2\},X\}, \{\{2\},\{0\},X\}.
\]
\end{exm}

Note that the facets of nestohedra always correspond to singleton nested sets of the form $\{X\}$, with $X$ a proper subset of $[n]_0$. Throughout the paper, a facet will be identified with $X\subset [n]_0$ such that the nested set $\{X\}$ corresponds to this facet. The pentagonal facet from the preceding example is identified with $X=\{0,1,2\}$.

\begin{dfn}
We say that two facets of a polytope are \emph{separated}
when they have no vertex in common.
\end{dfn}

\begin{rem}
Two facets of a realisation of a nested-set complex are separated if and only if there is no nested set that contains them both.
\end{rem}

\begin{prop}\label{tv1}
Two facets are separated if and only if they are incomparable and their union belongs to the building set.
\end{prop}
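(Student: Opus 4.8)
The plan is to reduce the statement to a fact about the two-element set $\{X,Y\}$, via the Remark above, which says that two facets are separated exactly when no nested set contains both of them. Since $X$ and $Y$ are facets, both belong to $\mathcal{B}$, so $\{X,Y\}\subseteq\mathcal{B}$; hence whether $\{X,Y\}$ is a nested set depends solely on its antichains, and a two-element set has exactly one candidate antichain, namely $\{X,Y\}$ itself, which is genuinely an antichain if and only if $X$ and $Y$ are incomparable. Establishing this observation first is the crux, and everything else follows by unwinding Definition~\ref{d1}.

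For the implication from ``$X,Y$ incomparable and $X\cup Y\in\mathcal{B}$'' to ``separated'', I would argue by contradiction: were there a nested set $N$ with $X,Y\in N$, then $\{X,Y\}\subseteq N$ would itself be a nested set (a subset of a nested set is nested, by Definition~\ref{d1}); but under our hypothesis $\{X,Y\}$ is an antichain whose union lies in $\mathcal{B}$, contradicting the definition of a nested set. So no nested set contains both, and the Remark gives that they are separated.

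For the converse, suppose $X$ and $Y$ are separated. Then in particular $\{X,Y\}$ is not a nested set, since a nested set equal to $\{X,Y\}$ would contain both. By the opening observation, the only way $\{X,Y\}$ can fail to be a nested set is that $\{X,Y\}$ is itself an antichain with union in $\mathcal{B}$; that is, $X$ and $Y$ are incomparable and $X\cup Y\in\mathcal{B}$. As a byproduct this confirms the intuitively clear fact that comparable facets are never separated: when $X$ and $Y$ are comparable, $\{X,Y\}$ has no antichain and is therefore a nested set containing both.

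I do not anticipate a real obstacle. The only thing demanding care is the bookkeeping around the identification of facets with members of $\mathcal{B}$ and around the ``link of $[n]_0$'' convention, so as to be sure that $X,Y\in\mathcal{B}$, that neither equals $[n]_0$, and that $\{X,Y\}$ is an admissible candidate nested set --- which is exactly what makes the nested-set condition collapse to a statement about the single antichain $\{X,Y\}$.
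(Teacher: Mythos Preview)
Your argument is correct and is essentially the same as the paper's: both reduce the question, via the Remark, to whether the two-element set $\{X,Y\}$ is itself a nested set, and both observe that this holds precisely when $X,Y$ are comparable or $X\cup Y\notin\mathcal{B}$. The paper phrases the $(\Rightarrow)$ direction as a direct contraposition (``if comparable or union not in $\mathcal{B}$, then $\{X,Y\}$ is a nested set containing both''), whereas you unpack the same equivalence by analysing the sole candidate antichain; the content is identical.
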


\begin{proof}
$(\Leftarrow)$ It is clear that if two facets are incomparable and their union belongs to the building set, then there is no nested set that contains them both.
\vspace{1ex}

$(\Rightarrow)$ By contraposition, if two facets $X$ and $Y$ are comparable or their union does not belong to the building set, then $\{X,Y\}$ is a nested set that contains them both.
\end{proof}

\begin{dfn}\label{bojenje}
For $\mathcal{F}$ being the set of facets of a given polytope, a function $f\colon\mathcal{F}\rightarrow[m]$ is a
\emph{proper colouring} of this polytope when every two facets of the same colour are separated. The \emph{chromatic number} of a polytope is the
minimal $m$ such that a proper colouring of its
facets with $m$ colours exists.
\end{dfn}

\section{Permutohedra}

The facets of the $n$-dimensional permutohedron are identified with the proper subsets of $[n]_0$. Two facets $X,Y\subset [n]_0$ have a common vertex, i.e.\ there is a nested set containing both if and only if $X$ and $Y$ are comparable, which entails that the colouring $f\colon\mathcal{F}\rightarrow [n]$ defined by $f(X)=|X|$ is proper. Since the chromatic number of an $n$-dimensional polytope has $n$ as the lower bound, this means that the chromatic number $\pi_n$ of the $n$-dimensional permutohedron is $n$.

\begin{figure}[h]
    \centering
    \begin{tabular}{cc}
\includegraphics[width=0.35\textwidth]{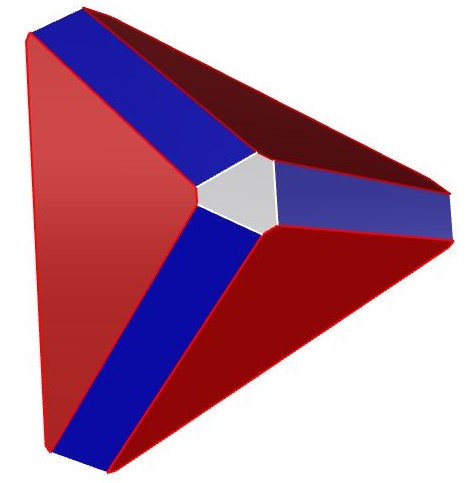}
&
\includegraphics[width=0.35\textwidth]{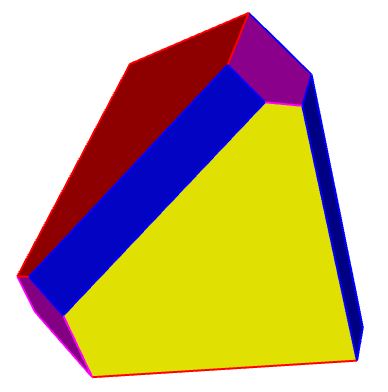}
\end{tabular}
    \caption{Proper colourings of the 3-dimensional permutohedron and the 3-dimensional  associahedron}
    \label{obojeniPA}
\end{figure}

\section{Associahedra}\label{asoc}

The building set for the $n$-dimensional associahedron consists of the sets of the form \[[a,b]=\{x\in [n]_0\mid a\leq x\leq b\},\] for $0\leq a\leq b\leq n$. The elements of the building set that are proper subsets of $[n]_0$ are called \emph{segments} of $[n]_0$. The facets of the $n$-dimensional associahedron are identified with the segments of $[n]_0$.

\begin{rem}\label{rem3}
The facets $[a,b]$ and $[c,d]$, with $a\leq c$, are incomparable and their union belongs to the building set \emph{if and only if} $a+1\leq c\leq b+1$ and $b<d$. In this case we say that $[a,b]$ \emph{precedes} $[c,d]$.
\end{rem}

As a corollary of Proposition~\ref{tv1} and Remark~\ref{rem3} we have the following.

\begin{cor}\label{cor1}
For mutually separated facets $X_1,\ldots,X_k$ and all
$i,j,l\in[k]$ we have:
\[X_i\cap X_j=X_i\cap X_l \Rightarrow j=l.\]
\end{cor}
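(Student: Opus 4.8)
The plan is to translate the hypothesis into the combinatorics of segments using Proposition~\ref{tv1} and Remark~\ref{rem3}, and then to reduce the statement to an injectivity claim about the map $m\mapsto X_i\cap X_m$ for fixed $i$.

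First I would set up coordinates. Each $X_t$ is a segment $[a_t,b_t]$ of $[n]_0$. Since the $X_t$ are pairwise separated, Proposition~\ref{tv1} and Remark~\ref{rem3} say that for each pair one precedes the other; in particular their left endpoints are distinct, and the one with the smaller left endpoint also has the smaller right endpoint (this is forced by the inequality $b<d$ in Remark~\ref{rem3}). Hence, after relabelling, I may assume $a_1<a_2<\cdots<a_k$ and correspondingly $b_1<b_2<\cdots<b_k$, together with the ``precedes'' inequalities $a_j\le b_i+1$ for all $i<j$. Then $X_i\cap X_m=[\max(a_i,a_m),\min(b_i,b_m)]$, understood to be empty when the left endpoint exceeds the right one, so it suffices to show $m\mapsto X_i\cap X_m$ is injective: this immediately turns $X_i\cap X_j=X_i\cap X_l$ into $j=l$. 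The value $m=i$ is automatically separated from all others, since incomparability gives $X_i\cap X_m\subsetneq X_i=X_i\cap X_i$ whenever $m\neq i$.

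The bulk of the work is then a short case analysis on the positions of $j$ and $l$ relative to $i$. If $j,l<i$, then $X_i\cap X_j=[a_i,b_j]$ and $X_i\cap X_l=[a_i,b_l]$; if both are nonempty their equality gives $b_j=b_l$, and if both are empty then $a_i>b_j$ and $a_i>b_l$, which combined with $a_i\le b_j+1$ and $a_i\le b_l+1$ forces $a_i=b_j+1=b_l+1$, again $b_j=b_l$; in both situations $j=l$. The case $j,l>i$ is symmetric, with $a_j,a_l$ (and the inequalities $a_j\le b_i+1$, $a_l\le b_i+1$) playing the role of $b_j,b_l$. The remaining case $j<i<l$ (and its mirror image) is where I expect the only real difficulty: if the two intersections are nonempty and equal, then $\max(a_i,a_j)=a_i$ must equal $\max(a_i,a_l)=a_l$, contradicting $a_i\neq a_l$; and if both are empty, then as above $a_i=b_j+1$ and $a_l=b_i+1$, and now applying the ``precedes'' inequality to the pair $j<l$ gives $a_l\le b_j+1=a_i\le b_i<b_i+1=a_l$, a contradiction.

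So the main obstacle is precisely that last subcase ($j<i<l$ with both intersections empty): it is the one point where one cannot get by with separation of $X_i$ from $X_j$ and from $X_l$ alone, but must invoke the ``precedes'' relation between $X_j$ and $X_l$ themselves. Everything else reduces to bookkeeping with $\max$ and $\min$ of the endpoints.
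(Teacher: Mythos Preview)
Your proof is correct and follows exactly the route the paper intends: the paper states the result simply as a corollary of Proposition~\ref{tv1} and Remark~\ref{rem3} without spelling out any details, and your argument is a faithful elaboration of precisely those two ingredients---translating mutual separation into the ``precedes'' order on segments, reading off the intersections from endpoints, and checking injectivity by the natural case split on the relative positions of $j,l$ with respect to $i$. The one nontrivial step you isolate (the $j<i<l$ subcase with both intersections empty, where separation of $X_j$ and $X_l$ must be invoked) is indeed where the mutual separation hypothesis does real work beyond the pairwise relations with $X_i$.
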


\begin{prop}\label{tv2}
Let $f\colon \mathcal{F} \rightarrow [m]$ be a proper colouring of the $n$-dimensional associahedron.
Then for a facet $X$ of cardinality $\kappa$ the cardinality of
$\{Y\mid f(Y)=f(X)\}$ is at most $\min\{\kappa+1,n-\kappa+2\}$.
\end{prop}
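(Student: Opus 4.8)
The plan is to fix a proper colouring $f$ and a facet $X=[a,b]$ of cardinality $\kappa=b-a+1$, and to bound the size of the colour class $C=\{Y\mid f(Y)=f(X)\}$ by analysing the structure forced by Remark~\ref{rem3}. Every two distinct elements of $C$ are separated, hence by Proposition~\ref{tv1} any two of them are incomparable with union in the building set; by Remark~\ref{rem3} this means that for any two $Y,Z\in C$, one precedes the other. So $C$ is a chain under the "precedes" relation, and I would exploit that this relation is very rigid: if $[a_1,b_1]$ precedes $[a_2,b_2]$ precedes $\dots$ precedes $[a_t,b_t]$, then reading off Remark~\ref{rem3} we get $a_1\le a_2\le\dots\le a_t$ and $b_1<b_2<\dots<b_t$, together with the "overlap" constraints $a_{i+1}\le b_i+1$.

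For the bound $\kappa+1$: list $C=\{Y_0,Y_1,\dots,Y_{t}\}$ as a precedence chain, with $X=Y_j$ sitting somewhere in it. The left endpoints are weakly increasing and the right endpoints are strictly increasing along the chain. I would count the members of $C$ that precede $X$ (indices $<j$) and those preceded by $X$ (indices $>j$) separately. If $Y_i=[a_i,b_i]$ precedes $X=[a,b]$ (so $i<j$), then $a_i\le a$ and $a\le b_i+1$, i.e.\ $a-1\le b_i\le$ (something $<b$); more importantly, the strictly increasing right endpoints $b_i$ all lie in the range determined by the overlap condition, which pins them into an interval of length roughly $\kappa$. A cleaner route: use Corollary~\ref{cor1}. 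The intersections $X\cap Y$ for $Y\in C\setminus\{X\}$ are pairwise distinct, and each such intersection is a nonempty segment contained in $X$ (when $Y$ precedes $X$ the intersection is $[a,b_Y]$ with $a\le b_Y<b$, and when $X$ precedes $Y$ it is $[a_Y,b]$ with $a<a_Y\le b$). There are exactly $\kappa-1$ nonempty proper segments of $X$ of the form $[a,\cdot]$ and $\kappa-1$ of the form $[\cdot,b]$, but the two families overlap only in... — so a careful count of the available distinct intersections gives $|C\setminus\{X\}|\le \kappa$, i.e.\ $|C|\le\kappa+1$. I expect the main obstacle to be doing this count without double-counting: the intersections with predecessors are left-anchored segments of $X$ and those with successors are right-anchored, and one must check these two sets of possible intersection-values are essentially disjoint so their sizes add rather than max.

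For the bound $n-\kappa+2$: this should follow by the same argument applied to complements, or directly by a symmetric count in terms of the "outside" of $X$. Along a precedence chain through $X=[a,b]$, each predecessor $[a_i,b_i]$ satisfies $a_i\le a$ with $a_i$ weakly decreasing as we move away from $X$, and $b_i<b$, while the overlap condition forces $a_i\ge a-$(gap); dually each successor has right endpoint $>b$ but bounded by $n$. The quantities that are genuinely free are the left endpoints of predecessors (which live in $[0,a]$, contributing at most $a+1=n-\kappa+1-(n-b)$-type bound) and the right endpoints of successors (which live in $[b,n]$), and the overlap constraints tie one chain-step's growth on the right to the next step's room on the left. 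Summing the two "one-sided" counts gives at most $(a+1)+(n-b)+1$ possible positions; since $a+(n-b)=n-(b-a)=n-\kappa+1$, this yields $|C|\le n-\kappa+2$. Taking the minimum of the two bounds completes the proof. The one subtlety I would be careful about is the boundary/edge cases where $X$ is the first or last element of its precedence chain, where one of the two one-sided counts is empty and the bound is attained more easily.
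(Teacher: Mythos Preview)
Your argument for the bound $n-\kappa+2$ is essentially the paper's: mapping each predecessor $Y$ to $\min Y\in[0,a-1]$ and each successor $Z$ to $\max Z\in[b+1,n]$ is injective (equal left endpoints, respectively right endpoints, would force comparability), so $|C\setminus\{X\}|\le a+(n-b)=n-\kappa+1$. Your interval bookkeeping is a little sloppy (you write $[0,a]$ and ``$(a+1)+(n-b)+1$''), but the final line is right.

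The gap is in the bound $\kappa+1$. Your ``cleaner route'' via Corollary~\ref{cor1} does not give what you claim. The intersections $X\cap Y$ with $Y$ a predecessor are left-anchored segments $[a,b_Y]$ with $b_Y\in[a-1,b-1]$ (possibly empty), and those with $Y$ a successor are right-anchored $[a_Y,b]$ with $a_Y\in[a+1,b+1]$ (possibly empty). These two families of values are \emph{disjoint}, so Corollary~\ref{cor1} only tells you that their sizes add, yielding $|C\setminus\{X\}|\le 2\kappa-1$, i.e.\ $|C|\le 2\kappa$. You flagged exactly this step as the main obstacle, but your proposed resolution (``their sizes add rather than max'') goes the wrong direction: adding is what \emph{loses} the factor of two.

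What you are missing is one more use of separation, between a predecessor and a successor. You already set up the chain $Y_0,\ldots,Y_t$ with $X=Y_j$; since \emph{every} pair in $C$ is separated, for any $i<j<k$ the segment $Y_i$ also precedes $Y_k$, whence $a_k\le b_i+1$. Now map each predecessor $Y_i$ to $b_i+1\in[a,b]$ and each successor $Y_k$ to $a_k-1\in[a,b]$. The predecessor values are distinct (the $b_i$ strictly increase), the successor values are distinct (the $a_k$ strictly increase), and the inequality $a_k-1\le b_i<b_i+1$ forces every successor value to lie strictly below every predecessor value. Hence all $t$ values are distinct elements of $X$, giving $t\le\kappa$ and $|C|\le\kappa+1$. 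This is exactly the paper's injection $g\colon\mathcal{F}_X\to X$; the point is that Corollary~\ref{cor1} handles injectivity on each side separately, but one needs Proposition~\ref{tv1} applied \emph{across} the two sides to finish.
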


\begin{proof}
Let $\mathcal{F}_X=\{Y\mid f(Y)=f(X)\}-\{X\}$. It suffices to show
that $|\mathcal{F}_X|\leq \min\{\kappa,n-\kappa+1\}$. Let
$g\colon\mathcal{F}_X\rightarrow X$ and
$h\colon\mathcal{F}_X\rightarrow [n]_0-X$ be defined as
\[
g(Y)=\left\{
\begin{array}{cl} \max(Y)+1 & \mbox{\rm if}\; Y\; \mbox{\rm precedes}\; X,\\[1ex]
\min(Y)-1 & \mbox{\rm if}\; X\; \mbox{\rm precedes}\; Y,
\end{array} \right .
\]
\[
h(Y)=\left\{
\begin{array}{cl} \min(Y) & \mbox{\rm if}\; Y\; \mbox{\rm precedes}\; X,\\[1ex]
\max(Y) & \mbox{\rm if}\; X\; \mbox{\rm precedes}\; Y.
\end{array} \right .
\]

That $h$ is one-one follows easily from Proposition~\ref{tv1}. By
Corollary~\ref{cor1}, we have that $g$ restricted to the members
of $\mathcal{F}_X$ that precede $X$ and $g$ restricted to the
members of $\mathcal{F}_X$ preceded by $X$ are one-one. It remains
to conclude, by Proposition~\ref{tv1}, that for every $Y$
preceding $X$ and every $Z$ preceded by $X$, we have $g(Y)>g(Z)$.
Hence, $g$ is one-one.
\end{proof}

Consider the following two sequences of integers $A_i$ and
$B_i$, for $2\leq i\leq \lceil n/2\rceil +1$.
\[
\begin{array}{l}
B_i=\left\{
\begin{array}{cl} 0 & \mbox{\rm if}\; i=2,\\[1ex]
n+3+B_{i-1}-(i-1)A_{i-1} & \mbox{\rm if}\; 2<i\leq \lceil n/2\rceil +1,
\end{array} \right .
\\[3ex]
A_i=\left\{
\begin{array}{cl} \left\lceil\frac{n+3+B_i}{i}\right\rceil & \mbox{\rm if}\;
n\; \mbox{\rm is even or}\; i\leq \lceil n/2\rceil,\\[1ex]
1 & \mbox{\rm if}\; n\; \mbox{\rm is odd and}\; i= \lceil n/2\rceil +1,
\end{array} \right .
\end{array}
\]

\begin{lem}\label{lem0}
There is a proper colouring of the $n$-dimensional associahedron with $\sum\limits_{i=2}^{\lceil n/2\rceil +1}A_i$ colours.
\end{lem}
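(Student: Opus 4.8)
The plan is to build an explicit proper colouring and then count its colour classes. By Definition~\ref{bojenje} a proper colouring with $m$ colours is exactly a partition of the facet set $\mathcal{F}$ into $m$ \emph{separated families}, so properness will be automatic and the task reduces to partitioning $\mathcal{F}$ into $\sum_{i=2}^{\lceil n/2\rceil +1}A_i$ separated families. I first rewrite the separation condition. By Proposition~\ref{tv1} and Remark~\ref{rem3}, and since ``precedes'' strictly increases both endpoints, a set of segments $[a_1,b_1],\dots,[a_k,b_k]$ with $a_1<\dots<a_k$ is separated if and only if in addition $b_1<\dots<b_k$ and $a_k\le b_1+1$; one direction is Remark~\ref{rem3} applied to the relevant pairs, the other is immediate because then $[a_s,b_s]$ precedes $[a_t,b_t]$ for all $s<t$. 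Call such a set a \emph{chain of length $k$}; by Proposition~\ref{tv2} all of its members have cardinality between $k-1$ and $n-k+2$.

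Next I sort the facets into \emph{levels}: $X$ goes to level $\ell(X)=\min\{|X|+1,\,n+2-|X|\}$. So level $i$ consists of the $n-i+3$ \emph{short} segments of cardinality $i-1$ and the $i$ \emph{long} segments of cardinality $n-i+2$ (these two families coincide, with $(n+3)/2$ members, precisely when $n$ is odd and $i=\lceil n/2\rceil+1$), the levels run over $2,\dots,\lceil n/2\rceil+1$, and a chain of length $i$ can only meet levels $\ge i$. Two facts make the levels usable. First, for every $i\le\lceil n/2\rceil+1$ the long segments of level $i$, i.e.\ $[0,n-i+1],[1,n-i+2],\dots,[i-1,n]$, form one chain of length $i$, the condition $a_k\le b_1+1$ amounting here to $2i\le n+3$. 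Second, for any $a$ and any $t\le i-1$ the consecutive short segments $[a,a+i-2],[a+1,a+i-1],\dots,[a+t,a+i-2+t]$ of level $i$ form a chain of length $t+1$; in particular a short segment of level $i$ may legally be placed into a chain of length $i-1$.

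With these in hand I assemble the colouring level by level, from $i=2$ upward, carrying a small surplus along. When level $i$ is reached, a block of short segments of level $i$ may already have been coloured while processing level $i-1$; writing $C_i$ for the number of facets of level $i$ still uncoloured, one checks by induction (with $C_2=n+3$) that $C_i=n+3+B_i$, with the obvious modification of this count at the top level for odd $n$. At level $i$ I use the length-$i$ long-segment chain of the first fact together with $A_i-1$ chains obtained by cutting the still-uncoloured short segments of level $i$ into consecutive blocks of length $\le i$; since these $A_i$ chains have total capacity $iA_i\ge C_i$, they receive all $C_i$ facets of level $i$, and the remaining $iA_i-C_i=|B_{i+1}|$ slots (there are at most $i-1$ of them) are filled by a block of short segments of level $i+1$, attached, by the second fact, to whichever level-$i$ chain is not yet full. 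The parity-dependent value of $A_{\lceil n/2\rceil+1}$ is exactly what this accounting delivers at the top level, where for odd $n$ the single chain of length $\lceil n/2\rceil+1$ of the first fact absorbs all that remains. Beginning with $B_2=0$ and iterating up to $i=\lceil n/2\rceil+1$, we colour all $n(n+3)/2$ facets with $\sum_{i=2}^{\lceil n/2\rceil+1}A_i$ colours.

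The step I expect to be the genuine obstacle is the legality of this carrying. When the at most $i-1$ free slots at level $i$ are filled with short segments of level $i+1$, one must choose carefully \emph{which} of those segments to move down, and to \emph{which} level-$i$ chain they are attached, so that each enlarged family still obeys $b_1<\dots<b_k$ and $a_k\le b_1+1$. This forces a definite convention for the postponed segments (naturally a block at one end of the line of short segments of each level) and may require locally recutting some of the level-$i$ chains to create a suitable endpoint onto which the incoming block can be appended. Once that bookkeeping is fixed, the remainder is routine verification against the recursions defining $A_i$ and $B_i$.
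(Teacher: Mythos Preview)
Your overall strategy---partition the facets into levels $\ell(X)=\min\{|X|+1,n+2-|X|\}$, spend $A_i$ colours on level $i$, carry the surplus forward---is precisely the paper's, and your counting ($C_i=n+3+B_i$, leftover $-B_{i+1}$ at each stage) is correct.

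The gap is exactly the one you flag in your last paragraph, but it is more than bookkeeping. By putting all $i$ long segments of level $i$ into one already-full chain, you force the carry-over to run from a \emph{short}-segment chain of level $i$ to short segments of level $i+1$. If, as under your greedy cut into consecutive blocks, the incomplete chain ends at $[n-i+2,n]$, then no level-$(i+1)$ short segment can be appended: those have starting points at most $n-i+1$, while your chain condition $a_k\le b_1+1$ would require a starting point at least $n-i+3$. One can prepend instead, but then the admissible $[c,c+i-1]$ satisfy $n-2i+2\le c\le a-2$ (with $a$ the smallest starting point in the incomplete chain), a block lying in the \emph{interior} of the level-$(i+1)$ short segments rather than at either end. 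So after one step the uncoloured short segments of level $i+1$ no longer form a contiguous run, and your suggested convention ``a block at one end'' breaks; already for $n=5$ the segment carried from level $4$ into level $3$ is forced to be $[1,3]$ (or $[2,4]$ under the other cut), never an end segment. The scheme can probably be rescued, but not without a genuine argument you have not supplied.

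The paper sidesteps this by ordering each level differently: it lists the short segments first and the long segments last, as a single sequence
\[
[0,i-2],\ldots,[n-i+2,n],\,[0,n-i+1],\ldots,[i-1,n].
\]
One checks directly that any $i$ consecutive members of this sequence are pairwise separated, and---this is the crucial point---that the last $j<i$ members (long segments $[i-j,n-j+1],\ldots,[i-1,n]$) together with the first $i-j$ members of the level-$(i+1)$ sequence (short segments $[0,i-1],\ldots,[i-j-1,2i-j-2]$) are also pairwise separated. The long segments thus act as a bridge between consecutive levels, and the carry-over becomes automatic: one simply colours the concatenation of all rows in consecutive blocks. Replacing your ``long segments in their own chain'' with this ordering closes the gap with no further work.
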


\begin{proof}
If $n=2k-1$, then we consider the following sequences of facets
\[
\begin{array}{lc}
(2) & [0,0],\ldots,[n,n],[0,n-1],[1,n]
\\[1ex]
(3) &
[0,1],\ldots,[n-1,n],[0,n-2],[1,n-1],[2,n]
\\[1ex]
 & \vdots
\\[1ex]
(k) &
[0,k-2],\ldots,[k+1,n],[0,k],\ldots,[k-1,n]
\\[1ex]
(k+1) & [0,k-1],\ldots,[k,n],
\end{array}
\]
and if $n=2k$, then we replace the last two rows by
\[
\begin{array}{lc}
(k) &
[0,k-2],\ldots,[k+2,n],[0,k+1],\ldots,[k-1,n]
\\[1ex]
(k+1) &
[0,k-1],\ldots,[k+1,n],[0,k],\ldots,[k,n]
\end{array}
\]

The rows denoted by $(2)-(k)$ have $n+3$ members, while the cardinality of the last
row is $k+1$, i.e.\ $\lceil n/2\rceil
+1$, in the case $n=2k-1$, or $n+3$, in the case $n=2k$. Note that every $i$ consecutive members of the row denoted by $(i)$
are mutually separated and that the last $j$ members of the row
denoted by $(i)$, where $i>j$, together with the first $i-j$
members of the row denoted by $(i+1)$ are mutually separated.
Thus, we can properly colour the facets according to the following procedure: start with the first two facets in row (2) and use one colour for them, continue with the second pair of facets in this row and use another colour for this pair, and so on until $\lceil{(n+3)/2}\rceil=A_2$ colours are used and all the facets from the first row are coloured.

In the case $n=2k$, the last colour is used just for one (the last) facet in row~(2), and since this facet is separated with the first member of row~(3), one can colour this member with the same colour. The number of remaining facets in row~(3) is $n+3+B_3$, where $B_3$ counted as above is $n+3+B_2- 2 A_2=n+3-2 \lceil{(n+3)/2}\rceil$, which is $0$ when $n$ is odd, and $-1$ when $n$ is even. The remaining facets of row~(3) are coloured three consecutive in one colour, and again, one or two facets from row~(4) are borrowed and coloured with the last colour in case 3 does not divide $n+3+B_3$. This means that we have used $\lceil{(n+3+B_3)/3}\rceil=A_3$ new colours to finish with all the facets from row~(3).

We proceed with such a colouring showing that $\sum_{i=2}^{k}A_i$ colours is sufficient for rows (2)-(k). When we reach the last row, in the case $n=2k-1$, the members of this row are mutually separated and could be coloured with one colour, hence $A_{k+1}=1$. In the case $n=2k$, we proceed as with all the other rows, which means that $\lceil{(n+3+B_{k+1})/(k+1)}\rceil=A_{k+1}$ of new colours is sufficient for the remaining facets of the last row. Hence, we have a proper colouring with $\sum_{i=2}^{\lceil n/2\rceil +1}A_i$ colours.
\end{proof}

In order to show that there is no proper colouring with less colours than we used in Lemma~\ref{lem0}, let us consider a slightly more general and more intuitive problem. Instead of facets ordered in rows as in the proof of Lemma~\ref{lem0}, assume we have $l_2\geq 0$ balls of weight
$1/2$, $l_3\geq 0$ balls of weight $1/3$, and so on, up to $l_m\geq
0$ balls of weight $1/m$. (The balls here represent facets, the number $l_i$ represents the number of facets in row~(i), and the denominator in the weight of a ball represents $\min\{\kappa+1,n-\kappa+2\}$, where $\kappa$ is the cardinality of the corresponding facet.) These balls should be \emph{packed} into boxes (the boxes present colours) such that if $n$ balls $x_1,\ldots,x_n$ are packed in one box, then for $\mu(x)$ being the weight of $x$
\[
n\cdot\max\{\mu(x_1),\ldots,\mu(x_n)\}
\]
does not exceed 1.

\begin{rem}\label{rem0}
According to Proposition~\ref{tv2}, every proper colouring corresponds to a packing where $l_2=\ldots=l_k=n+3$ and $l_{k+1}=k+1$, when $n=2k-1$, or $l_{k+1}=n+3$ when $n=2k$. One will see below that the proof of Lemma~\ref{lem1} requires to allow all nonnegative values for $l_2,\ldots,l_m$.
\end{rem}

We are now in a position to generalise the proper colouring used in Lemma~\ref{lem0}, and to formulate this generalisation in terms of packing. Let $(\ast)$ be the following packing: start with $l_2$ balls of
weight $1/2$ and pack them into $\lceil{l_2}/{2}\rceil$ boxes,
where the last box, in case $l_2$ is odd, contains a ball of first
lower weight if there is any. The remaining balls of weigh $1/3$
are packed three per box, and one or two balls of first lower
weights are borrowed if 3 does not divide the number of these
remaining balls of weight $1/3$. We proceed analogously for all
the remaining balls of weight less than $1/3$.

\begin{rem}\label{rem1}
Our proper colouring from Lemma~\ref{lem0} corresponds to the packing $(\ast)$, where $l_2,\ldots,l_{k+1}$ are as in the first sentence of Remark~\ref{rem0}.
\end{rem}

\begin{dfn}
A packing is \emph{better} than another if it uses fewer boxes.
\end{dfn}

\begin{lem}\label{lem1}
There is no packing better than $(\ast)$.
\end{lem}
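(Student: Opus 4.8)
The plan is to prove the lemma by induction on the total number of balls $L=\sum_{i=2}^{m}l_i$, peeling off the first box of $(\ast)$ at each step. Let $p$ be the least index with $l_p\ge 1$ (if $L=0$ there is nothing to prove). Reading off the definition of $(\ast)$, its first box $B_0$ consists of $\min\{l_p,p\}$ balls of weight $1/p$, completed, when $l_p<p$, by the heaviest available balls of weight $<1/p$ (those of weight $1/(p+1)$ first, then $1/(p+2)$, and so on) until it holds $p$ balls, or by all the remaining balls if there are fewer than $p-l_p$ of them; and the same inspection shows that removing $B_0$ together with its balls turns $(\ast)$ into the packing that the recipe $(\ast)$ produces for the configuration obtained from $(l_i)$ by deleting the balls of $B_0$, so that this packing uses exactly $|(\ast)|-1$ boxes. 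Hence the induction reduces to the following exchange statement, which is the heart of the matter: \emph{every packing $\Pi$ can be rearranged, without increasing the number of boxes, into a packing $\Pi'$ containing a box $D$ whose balls have exactly the same multiset of weights as $B_0$}. Granting this and deleting $D$ from $\Pi'$, we get $|\Pi|\ge|\Pi'|=|\Pi'\setminus\{D\}|+1\ge(|(\ast)|-1)+1=|(\ast)|$ from the inductive hypothesis applied to the reduced configuration. When $L<p$ the packing $(\ast)$ is a single box and any packing has at least one box, so I may assume $L\ge p$.

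For the exchange statement I would argue as follows. Every ball has weight at most $1/p$, so each box containing a ball of weight $1/p$ has at most $p$ balls; let $C_1,\dots,C_t$ be these boxes, so $\sum_r|C_r|\le tp$. Since all balls of $C_1\cup\dots\cup C_t$ have weight at most $1/p$, any redistribution of them into $t$ boxes of at most $p$ balls each keeps every box admissible and leaves the box count unchanged. If $l_p\ge p$, redistribute so that one box $D$ consists of $p$ balls of weight $1/p$, which already matches $B_0$. If $l_p<p$, redistribute so that one box $D$ contains all $l_p$ balls of weight $1/p$ and is as large as allowed, $|D|=\min\{p,\sum_r|C_r|\}$; then, first, while $|D|<p$ move a ball from outside $D$ into $D$ (each such move is admissible and creates no box) until $|D|=p$, which is possible since $L\ge p$; and second, while $D$ contains a ball of weight $1/b<1/p$ and some ball of weight $1/a$ with $p<a<b$ lies outside $D$, exchange those two balls. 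Each such exchange keeps $D$ admissible (it still contains a ball of weight $1/p$ and $p$ balls altogether), keeps the other affected box admissible (its size is unchanged and the smallest index occurring in it does not decrease), and strictly decreases the positive integer $\sum_{x\in D}d(x)$, where $d(x)$ denotes the denominator of the weight of $x$, so the process terminates. At termination every ball outside $D$ has weight at most that of every ball of $D$ of weight $<1/p$, so those $p-l_p$ balls of $D$ realise the heaviest available weights $<1/p$ and $D$ has the weight multiset of $B_0$.

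The step I expect to be the main obstacle is precisely this exchange statement in the case $l_p<p$: gathering the balls of weight $1/p$ into one box is routine, but one must also make sure the $p-l_p$ ``borrowed'' slots of that box can be filled with the globally heaviest available lighter balls, which is what the termination-by-potential exchange argument is for. This is also the reason Remark~\ref{rem0} insists on allowing arbitrary nonnegative $l_2,\dots,l_m$: the reduced configurations produced along the induction typically have many of the $l_i$ equal to $0$.
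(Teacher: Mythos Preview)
Your proof is correct and follows essentially the same approach as the paper's: both argue by induction (the paper phrases it as a minimal counterexample) and reduce the inductive step to an exchange statement showing that any packing can be rearranged, without adding boxes, so as to contain a box identical in content to the first box of $(\ast)$, after which one deletes that box on both sides and invokes the inductive hypothesis. The organizational details of the exchange differ slightly---the paper fixes a single box $B$ containing a maximum-weight ball and transforms it step by step into $A$ via a short case analysis, whereas you first redistribute all the boxes meeting weight $1/p$ and then run a swap process governed by the explicit potential $\sum_{x\in D} d(x)$---but these are cosmetic variations on the same idea.
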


\begin{proof}
Suppose that $n=l_2+\ldots+l_m$ is the smallest number of balls
such that there exists a packing $(\ast\ast)$ better than
$(\ast)$. Let $i$ be the smallest integer such that $l_i>0$
($l_2=\ldots=l_{i-1}=0$).
Consider the first box $A$ in $(\ast)$. From the definition of this packing one concludes the following:
\begin{enumerate}
\item $A$ contains a ball of maximal weight (here $1/i$);
\item except for the balls of the lowest weight in $A$, it contains all the balls with higher weight;
\item the integer $i$ defined above is greater or equal to the number of balls in $A$;
\item in any correct packing of these $n$ balls, a box that contains a ball of the maximal weight cannot have more elements than $A$.
\end{enumerate}

Let us denote the balls of weight $1/i$ by $\bullet$, with the first lower weight by $\circ$, with the second lower weight by $\star$, and take the following example of $A$:
\[
\bullet\bullet\bullet\bullet\circ\circ\star\star\star.
\]
From above we conclude that $i\geq 9$ and that the number of balls denoted by $\bullet$ and $\circ$, among the $n$ balls, is 4 and 2, respectively and that a box containing a ball $\bullet$ in any other packing of these balls cannot have more than 9 elements.

Consider a box $B$ in $(\ast\ast)$ that contains $\bullet$. From the preceding sentence we know that $B$ contains not more than 9 balls and not more than 4 balls $\bullet$ and not more than 2 balls $\circ$. We have the following cases.

\vspace{1ex}

(\emph{i}) If $A$ and $B$ have the same content, then by eliminating $A$ from $(\ast)$ and $B$ from
$(\ast\ast)$, one obtains a packing better than $(\ast)$ with fewer than $n$ balls, which contradicts our assumption on minimality of $n$.

\vspace{1ex}

(\emph{ii}) If $B$ contains just the balls $\bullet$, then we move balls from other boxes in $(\ast\ast)$ to $B$ in order to make it identical to $A$. The obtained packing remains correct and this step does not increase the number of boxes, hence the new packing is still better than $(\ast)$. By proceeding as in (\emph{i}), we obtain a contradiction.

\vspace{1ex}

(\emph{iii}) If $B$ contains balls of not maximal weight, then we make some replacements in $(\ast\ast)$. In order to help the reader to follow this procedure, let us assume that $B$ is of the form:
\[
\bullet\bullet\circ\star\star\diamond,
\]
where the weight of the ball denoted by $\diamond$ is lower than the weight of a ball denoted by $\star$.

If we have fewer than 4 balls $\bullet$ in $B$, we replace a ball from $B$ that is of the first lower weight than $\bullet$ (in our case this is a ball $\circ$) with a ball $\bullet$ from some other box $C$ in $(\ast\ast)$. In our example $B$ becomes
\[
\bullet\bullet\bullet\star\star\diamond.
\]
The box $A$ is a witness that $B$ is not overburden, and a heavier ball is replaced by a lighter ball in $C$. Moreover, this step does not increase the number of boxes, hence the packing obtained by this replacement is still better than $(\ast)$. We continue with these replacements until we reach 4 balls $\bullet$ in $B$, or $B$ contains only the balls $\bullet$. In the latter case we proceed as in (\emph{ii}) in order to obtain a contradiction. In the former case, as in our example when $B$ becomes
\[
\bullet\bullet\bullet\bullet\star\diamond
\]
and it contains some balls other than $\bullet$, we continue with such replacements with the balls of the first lower weight in the place of $\bullet$, and in our example $B$ becomes
\[
\bullet\bullet\bullet\bullet\circ\circ.
\]
Eventually, we transform $B$ into an initial (not necessarily proper) segment of $A$. If so obtained $B$ is not the same as $A$, then as in (\emph{ii}), we move balls from other boxes in $(\ast\ast)$ to $B$ in order to make it identical to $A$. In our example, three balls $\star$ are moved from some other boxes to $B$, without replacement. We conclude, as above, that the obtained packing remains correct, and that the number of boxes in it does not increase. This brings us to the case (\emph{i}), which leads to a contradiction.
\end{proof}

From Lemmata~\ref{lem0}, \ref{lem1} and Remarks~\ref{rem0}, \ref{rem1} it is easy to conclude the following result concerning the chromatic number $\alpha_n$ of the $n$-dimensional associahedron.

\begin{thm}\label{teo1}
$\displaystyle\alpha_n=\sum_{i=2}^{\lceil n/2\rceil +1}A_i$.
\end{thm}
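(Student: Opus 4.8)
\section*{Proof proposal for Theorem~\ref{teo1}}

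The plan is to obtain the equality by concatenating the two one-sided estimates already prepared in Lemmata~\ref{lem0} and~\ref{lem1}, using the dictionary between proper colourings and packings recorded in Remarks~\ref{rem0} and~\ref{rem1}. The upper bound is immediate: Lemma~\ref{lem0} exhibits a proper colouring of the $n$-dimensional associahedron that uses exactly $\sum_{i=2}^{\lceil n/2\rceil+1}A_i$ colours, so by Definition~\ref{bojenje} we get $\alpha_n\leq\sum_{i=2}^{\lceil n/2\rceil+1}A_i$.

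For the lower bound I would argue that every proper colouring of the associahedron determines a legal packing with no more boxes than it has colours. First I would count, for each $i$ with $2\leq i\leq\lceil n/2\rceil+1$, the facets (segments of $[n]_0$) with $\min\{\kappa+1,n-\kappa+2\}=i$, where $\kappa$ is the cardinality: there are $n-c+2$ segments of cardinality $c$, and the cardinalities contributing weight $1/i$ are $c=i-1$ and $c=n-i+2$, which gives $l_i=n+3$ in all cases except $l_{\lceil n/2\rceil+1}=\lceil n/2\rceil+1$ when $n$ is odd (the two cardinalities then coincide) --- precisely the values fixed in Remark~\ref{rem0}. Assigning to a facet of cardinality $\kappa$ a ball of weight $1/\min\{\kappa+1,n-\kappa+2\}$, Proposition~\ref{tv2} says that a colour class containing a facet of weight $1/j$ --- in particular one of maximal weight in that class --- has at most $j$ members, so the number of balls in the corresponding box times their maximal weight does not exceed $1$; hence the colouring is a legal packing of the balls of Remark~\ref{rem0}, and the number of boxes it uses is at most the number of colours.

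Then I would invoke Lemma~\ref{lem1}: no packing of these balls is better than $(\ast)$, so the number of colours of any proper colouring is at least the number of boxes used by $(\ast)$. By Remark~\ref{rem1} the packing $(\ast)$ on these $l_i$'s is exactly the one realised by the colouring of Lemma~\ref{lem0}, and the recursion defining $B_i$ and $A_i$ is just the bookkeeping of that packing --- $B_i$ records how many weight-$1/i$ balls get borrowed into earlier boxes, and $A_i$ counts the boxes then devoted to weight-$1/i$ balls --- so $(\ast)$ uses $\sum_{i=2}^{\lceil n/2\rceil+1}A_i$ boxes. Combining, $\alpha_n\geq\sum_{i=2}^{\lceil n/2\rceil+1}A_i$, and together with the upper bound this yields the claimed equality.

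I expect the only genuine subtlety to be this bookkeeping: verifying that the multiset of ball-weights produced by the segments of $[n]_0$ is exactly the one fixed in Remark~\ref{rem0} (so that Lemma~\ref{lem1} applies verbatim), and checking that the number of boxes of $(\ast)$ for those $l_i$'s is indeed the closed form $\sum_{i=2}^{\lceil n/2\rceil+1}A_i$ rather than some shifted sum; in particular one must handle the odd-$n$ endpoint $l_{\lceil n/2\rceil+1}=\lceil n/2\rceil+1$ with $A_{\lceil n/2\rceil+1}=1$ correctly. Everything else is a direct assembly of the preceding results.
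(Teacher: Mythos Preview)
Your proposal is correct and follows precisely the route the paper takes: the paper's own proof is the single sentence ``From Lemmata~\ref{lem0}, \ref{lem1} and Remarks~\ref{rem0}, \ref{rem1} it is easy to conclude the following result,'' and your write-up is exactly the unpacking of that sentence (upper bound from Lemma~\ref{lem0}; lower bound by turning a proper colouring into a packing via Remark~\ref{rem0}, invoking the optimality of $(\ast)$ from Lemma~\ref{lem1}, and identifying the box count of $(\ast)$ with $\sum A_i$ via Remark~\ref{rem1}). Your explicit verification of the $l_i$ values and of the legality of the induced packing is more detailed than the paper, but it is the same argument.
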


\vspace{2ex}

On the other hand, it is not easy to deduce an explicit formula for $\alpha_n$ from Theorem \ref{teo1} . However, it is clear that for all $2\leq i \leq \lceil n/2\rceil +1$ it holds that $-i+1\leq B_i \leq 0$. This observation, together with some elementary minimisation/maximisation principles, delivers the following inequalities.

\begin{equation}\label{ineq1}
\alpha_n \geq n \left(\frac{1}{1}+\frac{1}{2}+\cdots+\frac{1}{\lceil n/2\rceil}\right)+4\left(\frac{1}{1}+\frac{1}{2}+\cdots+\frac{1}{\lceil n/2\rceil}\right)- \frac{3n+8}{2}
\end{equation}
\begin{equation}\label{ineq2}
\alpha_n \leq n \left(\frac{1}{1}+\frac{1}{2}+\cdots+\frac{1}{\lceil n/2\rceil+1}\right)+3\left(\frac{1}{1}+\frac{1}{2}+\cdots+\frac{1}{\lceil n/2\rceil+1}\right)
\end{equation}

These inequalities indicate the importance of the harmonic number $\frac{1}{1}+\frac{1}{2}+\cdots+\frac{1}{n}$ for understanding the asymptotic behavior of the chromatic number $\alpha_n$. By Euler-Maclaurin formula, when $n$ tends to infinity, we have \begin{equation}\label{eq3}
\frac{1}{1}+\frac{1}{2}+\cdots+\frac{1}{n}=\ln n + \gamma + o(1),
\end{equation}
where $\gamma\approx 0.5772$ is the Euler-Mascheroni constant.

\begin{thm}\label{asi} When $n$ tends to infinity, we have that $\alpha_n \sim n \ln n$.
\end{thm}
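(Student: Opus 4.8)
\textbf{Proof plan for Theorem~\ref{asi}.}
The strategy is to sandwich $\alpha_n$ between the two explicit bounds \eqref{ineq1} and \eqref{ineq2} and then extract the leading-order asymptotics from the harmonic numbers appearing in both. Write $H_m=\sum_{j=1}^{m}1/j$. The right-hand side of \eqref{ineq1} is $(n+4)H_{\lceil n/2\rceil}-\tfrac{3n+8}{2}$ and the right-hand side of \eqref{ineq2} is $(n+3)H_{\lceil n/2\rceil+1}$, so it suffices to show both of these are asymptotically $n\ln n$; then $\alpha_n\sim n\ln n$ follows immediately by the squeeze principle, since $\alpha_n$ lies between them.

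First I would handle $H_{\lceil n/2\rceil}$. By \eqref{eq3} we have $H_{\lceil n/2\rceil}=\ln\lceil n/2\rceil+\gamma+o(1)=\ln n-\ln 2+\gamma+o(1)$, and the same expansion (up to an $o(1)$ shift that is absorbed) holds for $H_{\lceil n/2\rceil+1}$, since adding one more term changes the value by $O(1/n)=o(1)$. Multiplying by $n+4$ (respectively $n+3$) gives $(n+4)H_{\lceil n/2\rceil}=n\ln n+(\gamma-\ln 2)n+o(n)+O(\ln n)=n\ln n+O(n)$, and likewise for the other product. Subtracting the linear term $\tfrac{3n+8}{2}$ in \eqref{ineq1} only contributes a further $O(n)$. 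Hence the lower bound for $\alpha_n$ is $n\ln n+O(n)$ and the upper bound is $n\ln n+O(n)$, and dividing through by $n\ln n$ sends both ratios to $1$.

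Therefore $\alpha_n/(n\ln n)\to 1$, i.e.\ $\alpha_n\sim n\ln n$, which is the claim. The only point requiring a little care is the verification that the error in passing from $H_n$ to $H_{\lceil n/2\rceil}$ is genuinely $o(1)$ uniformly as a constant shift, which is immediate from \eqref{eq3} applied at argument $\lceil n/2\rceil$ together with $\ln\lceil n/2\rceil=\ln n+O(1/n)-\ln 2$; all remaining manipulations are the routine bookkeeping of absorbing $O(n)$ terms against the dominant $n\ln n$. I do not expect any substantive obstacle: the inequalities \eqref{ineq1} and \eqref{ineq2} have already done the combinatorial work, and what remains is purely the asymptotic analysis of harmonic sums.
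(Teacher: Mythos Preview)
Your proposal is correct and follows essentially the same approach as the paper: sandwich $\alpha_n$ using \eqref{ineq1} and \eqref{ineq2}, invoke the asymptotic \eqref{eq3} for the harmonic numbers, and apply the squeeze theorem to conclude $\alpha_n/(n\ln n)\to 1$. The paper's write-up is slightly more compressed---it passes directly to the inequality $\frac{\ln\lceil n/2\rceil+\gamma}{\ln n}+o(1)\le \frac{\alpha_n}{n\ln n}\le \frac{\ln(\lceil n/2\rceil+1)+\gamma}{\ln n}+o(1)$ rather than first isolating the $O(n)$ remainders---but the content is the same.
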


\begin{proof} Using the inequalities~(\ref{ineq1}), (\ref{ineq2}), and the equality~(\ref{eq3}),  we obtain
\[
\frac{\ln \lceil n/2\rceil + \gamma}{\ln n}+ o(1)\leq \frac{\alpha_n}{n \ln n} \leq \frac{\ln (\lceil n/2\rceil +1) + \gamma}{\ln n}+ o(1).\]
Applying the sandwich theorem, it follows that
$$\lim_{n\to +\infty} \frac{\alpha_n}{n \ln n}=1,$$ as it was claimed.
\end{proof}

\section{Cyclohedra}\label{s_cikl}

In order to estimate the chromatic numbers of cyclohedra, we have to generalise some notions introduced at the beginning of Section~\ref{asoc}. Throughout this section, $\boxplus$ denotes $+_{n+1}$, i.e.\ the addition  modulo $n+1$.

\begin{dfn}\label{d_segment}
For $a,b\in [n]_0$, a \emph{segment} $[a,b]$ of $[n]_0$
is a proper subset of $[n]_0$ of the form
\[\begin{array}{ll}
  \{i\mid a\leqslant i\leqslant b\}& \text {if} \; a\leqslant b,
   \text {or} \\[1ex]
   \{i\mid a\leqslant i\leqslant n\}\cup \{i\mid 0\leqslant i\leqslant b\}              &  \text {if} \; a> b.
\end{array}\]
For two segments $[a,b]$ and $[c,d]$ of $[n]_0$ we say that:
\begin{list}{}{ \setlength{\leftmargin}{1.25em} \setlength{\parsep}{0.25em}}
\item{(\emph{i}) $[c,d]$ \emph{overlaps} $[a,b]$, when $c>a$ and these two segments are neither disjoint nor comparable;}
\item{(\emph{ii}) $[c,d]$ \emph{follows} $[a,b]$, when $b\boxplus 1=c$ (note that $\boxplus$ is $+_{n+1}$); }
 \item{(\emph{iii}) $[a,b]$ \emph{precedes} $[c,d]$, when $[c,d]$ overlaps $[a,b]$ or $[c,d]$ follows $[a,b]$.}
\end{list}
\end{dfn}

The facets of the $n$-dimensional cyclohedron correspond to the segments of $[n]_0$.
As in the case of associahedra, the following proposition can be proved analogously.

\begin{prop}\label{p_tv1ciklo}
Two facets are separated if and only if one precedes the other.
\end{prop}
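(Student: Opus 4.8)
The plan is to mirror the structure of Proposition~\ref{tv1} for associahedra, adapting it to the cyclic setting. By Proposition~\ref{tv1} (applied to the cyclohedron's building set, namely all nonempty connected subsets of the cycle $0-\ldots-n-0$, which are exactly the segments together with $[n]_0$ itself), two facets $X=[a,b]$ and $Y=[c,d]$ are separated if and only if they are incomparable and $X\cup Y$ is again a connected subset of the cycle, i.e.\ a segment or all of $[n]_0$. So the real content is a purely combinatorial claim: for two segments, incomparability together with $X\cup Y$ being connected is equivalent to one of them preceding the other in the sense of Definition~\ref{d_segment}.

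First I would prove the easy direction: if $[c,d]$ follows $[a,b]$ (so $b\boxplus 1=c$) or $[c,d]$ overlaps $[a,b]$, then $[a,b]\cup[c,d]$ is connected. If they follow, the union is the segment $[a,d]$ (taking care of the wrap-around cases), which is connected; it is a proper subset of $[n]_0$ unless it exhausts everything, in which case the union is $[n]_0\in\mathcal{B}$ — in either case it lies in the building set. If they overlap, then since they are not disjoint their union is connected, and since they are not comparable the union is a proper superset of each, so again it is a segment or $[n]_0$. Incomparability in both subcases is immediate from the definitions (when they follow, $b\boxplus 1=c\notin[a,b]$ so $X\not\subseteq Y$, and symmetrically; when they overlap it is part of the hypothesis). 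Hence one precedes the other implies separated.

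For the converse I would argue by contraposition, as in the proof of Proposition~\ref{tv1}: suppose $X=[a,b]$ and $Y=[c,d]$ are incomparable with $X\cup Y$ connected, and show one precedes the other. Connectedness of the union of two segments forces them either to intersect, or to be ``adjacent'' in the cyclic order (one ending where the other begins, $b\boxplus1=c$ or $d\boxplus1=a$). In the adjacent case one follows the other and we are done. In the intersecting-but-incomparable case, one checks that the segment starting later (in cyclic position, i.e.\ with the larger left endpoint, after using the cyclic symmetry to fix which is which) overlaps the other; incomparability rules out the disjoint and comparable possibilities, leaving exactly ``overlaps''. The bookkeeping here is where the wrap-around makes things fiddly, so I expect the main obstacle to be a clean treatment of the modular arithmetic — in particular making precise the comparison ``$c>a$'' in clause~(\emph{i}) of Definition~\ref{d_segment} and verifying that exactly one of ``$[a,b]$ precedes $[c,d]$'' or ``$[c,d]$ precedes $[a,b]$'' holds — rather than any conceptual difficulty, since the skeleton is exactly that of Proposition~\ref{tv1} together with Remark~\ref{rem3}. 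Once the combinatorial equivalence is established, the proposition follows from Proposition~\ref{tv1} verbatim, just as the authors indicate with ``can be proved analogously.''
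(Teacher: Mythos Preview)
Your proposal is correct and is precisely the ``analogous'' argument the paper points to: you invoke Proposition~\ref{tv1} for the cyclohedron's building set and then establish the cyclohedral analogue of Remark~\ref{rem3}, namely that two segments are incomparable with connected union if and only if one precedes the other. The only quibble is terminological: in the converse you say you argue ``by contraposition'' but in fact argue directly (assume incomparable with connected union, deduce that one precedes the other); also, the worry about the comparison ``$c>a$'' dissolves once you note that if $a=c$ then the two segments are comparable, so in the intersecting-incomparable case $a\neq c$ as integers and the larger one gives the overlapping segment.
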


For $n\geq 1$, let $\gamma_n$ be the chromatic number of the
$n$-dimensional cyclohedron.

\begin{prop}\label{t_ciklogornjagranica}
$\alpha_{n-1}+1\leqslant \gamma_n \leqslant \alpha_{n-1}+n$.
\end{prop}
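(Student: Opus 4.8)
The plan is to bound $\gamma_n$ from below and above by relating proper colourings of the $n$-dimensional cyclohedron to proper colourings of the $(n-1)$-dimensional associahedron. The key observation is that if we fix an element, say $n \in [n]_0$, and restrict attention to those segments $[a,b]$ of $[n]_0$ (in the cyclic sense of Definition~\ref{d_segment}) that do \emph{not} contain $n$, these are exactly the sets of the form $\{i \mid a \leqslant i \leqslant b\}$ with $0 \leqslant a \leqslant b \leqslant n-1$, i.e.\ precisely the segments of $[n-1]_0$ in the linear sense of Section~\ref{asoc}. Moreover, for two such $n$-avoiding segments, ``one precedes the other'' in the cyclic sense of Definition~\ref{d_segment} agrees with the linear notion of ``precedes'' from Remark~\ref{rem3} (the cyclic wrap-around and the ``follows'' clause with $b \boxplus 1 = c$ only become relevant when $n$ is involved). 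Hence, by Proposition~\ref{p_tv1ciklo} and Proposition~\ref{tv1}, the sub-hypergraph of the cyclohedron's separation structure induced on the $n$-avoiding facets is isomorphic to the full separation structure of the $(n-1)$-dimensional associahedron.

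For the upper bound $\gamma_n \leqslant \alpha_{n-1} + n$: first I would take an optimal proper colouring of the $(n-1)$-dimensional associahedron with $\alpha_{n-1}$ colours and transport it, via the isomorphism above, to a proper colouring of all $n$-avoiding facets of the cyclohedron. It then remains to colour the facets that \emph{do} contain $n$. I claim these can be partitioned into $n$ classes, each of which is a set of mutually separated facets, so each class can receive one fresh colour. One natural way: a segment containing $n$ is determined cyclically by how many elements it ``reaches backwards'' from $n$ and forwards from $n$; I expect that grouping these facets by some single parameter (e.g.\ by $\min\{\kappa, n+1-\kappa\}$ where $\kappa$ is the cardinality, or more carefully by the position of one endpoint) yields at most $n$ pairwise-separated classes, using Proposition~\ref{p_tv1ciklo} to check separation within each class. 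This gives $\alpha_{n-1} + n$ colours in total.

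For the lower bound $\gamma_n \geqslant \alpha_{n-1} + 1$: suppose $f$ is a proper colouring of the $n$-dimensional cyclohedron. Its restriction to the $n$-avoiding facets is a proper colouring of the $(n-1)$-dimensional associahedron, so it uses at least $\alpha_{n-1}$ colours. To gain the extra $+1$, I would exhibit at least one facet $Z$ containing $n$ whose colour must differ from all colours appearing on the $n$-avoiding facets. The idea is to find a facet $Z \ni n$ that is \emph{not} separated from any facet in some colour class that witnesses optimality — more robustly, one shows there is a facet $Z \ni n$ that fails to be separated from at least one facet of each of the $\alpha_{n-1}$ colours used on the $n$-avoiding part. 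A cleaner route may be: pick $Z$ to be a segment of the form $[n,n]$ or $[0,n]$-type that is comparable with, or overlaps, a chosen representative in every colour class; since every facet of the $(n-1)$-dimensional associahedron, viewed inside $[n]_0$, is a subset of $[0,n-1]$, a segment like $[n, c]$ for suitable $c$ overlaps or contains large families of them, and one argues by a counting/pigeonhole argument that no matter the colouring, some such $Z$ is non-separated from every colour class.

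The main obstacle I anticipate is the lower bound, specifically the step of guaranteeing a single facet $Z \ni n$ that conflicts with \emph{all} $\alpha_{n-1}$ colours simultaneously; a priori one only gets that different facets containing $n$ might individually conflict with many colours. I would handle this either by a direct combinatorial construction of such a $Z$ using the explicit colouring $(\ast)$ from Lemma~\ref{lem1} together with its optimality (Theorem~\ref{teo1}), or by an exchange argument: if every facet containing $n$ could reuse an existing colour, one could delete all facets containing $n$ and still have a colouring of the associahedron, but then carefully re-inserting them should force a contradiction with the minimality established in Lemma~\ref{lem1}. The upper bound's partition of the $n$-containing facets into $n$ separated classes should be routine once the right indexing parameter is identified, and the isomorphism claim is immediate from the definitions.
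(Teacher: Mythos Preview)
Your upper-bound strategy is essentially the paper's: colour the ``associahedron part'' optimally and then handle the remaining facets with at most $n$ fresh colours. The paper makes the partition explicit and painless---two facets that both contain $n$ and have the same cardinality automatically have one preceding the other, so grouping by cardinality $|X|\in\{1,\ldots,n\}$ gives $n$ mutually separated classes. Your suggested parameters (e.g.\ $\min\{\kappa,n+1-\kappa\}$) would not work without further refinement, but cardinality does.

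There is, however, a genuine gap in your lower bound, and it stems from a small miscount in your ``key observation''. The cyclohedron facets avoiding $n$ are the intervals $[a,b]$ with $0\leqslant a\leqslant b\leqslant n-1$; these are \emph{not} exactly the facets of the $(n-1)$-dimensional associahedron, because $[0,n-1]$ is among them while it is \emph{not} a segment of $[n-1]_0$ (segments must be proper subsets). So the $n$-avoiding facets of the cyclohedron consist of the associahedron facets \emph{plus the single extra facet} $[0,n-1]$. That extra facet is the whole story for the $+1$: every associahedron facet is a subset of $[0,n-1]$, hence comparable to it, hence (by Proposition~\ref{p_tv1ciklo}) not separated from it. Therefore $[0,n-1]$ cannot share a colour with any associahedron facet, and $\gamma_n\geqslant\alpha_{n-1}+1$ follows immediately.

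Your proposed route---finding some $Z\ni n$ that conflicts with every colour class, via pigeonhole or an exchange argument tied to Lemma~\ref{lem1}---is both unnecessary and, as stated, does not work. For any $Z\ni n$ with $|Z|\geqslant 2$, its complement $[n]_0-Z$ is a genuine associahedron facet and $Z$ is separated from it (one follows the other), so $Z$ cannot conflict with \emph{every} associahedron facet; and there is no reason an arbitrary optimal associahedron colouring must place, in each colour class, a facet that conflicts with a fixed such $Z$. The single facet $[0,n-1]$ sidesteps all of this.
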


\begin{proof}
{First of all, notice that $\mathcal{F}_a \subset \mathcal{F}$, where $\mathcal{F}_a$ and $\mathcal{F}$
are the facets of the $(n-1)$-dimensional associahedron and the $n$-dimensional cyclohedron, respectively. If $X\in \mathcal{F}-\mathcal{F}_a$ then $X=[0,n-1]$ or $n\in X$.
Since every element of $\mathcal{F}_a$ is a subset of $[0,n-1]$,
we have to use at least $\alpha_{n-1}+1$ colours in order to colour properly the facets of the cyclohedron. This extra colour can also be used for the facet $[n,n]$ preceded by $[0,n-1]$.

If among the remaining elements of $\mathcal{F}$ we observe two of the same cardinality, then, since they both contain $n$, one of them precedes the other, i.e., by Proposition~\ref{p_tv1ciklo}, they can be coloured with the same colour. Hence, if $h\colon\mathcal{F}_{a}\rightarrow[\alpha_{n-1}]$ is a proper colouring described in the proof of Lemma~\ref{lem0},
then the colouring $f\colon\mathcal{F}\rightarrow[\alpha_{n-1}+n]$ defined by
\[
    f(X) =
\begin{cases}
   h(X)& \text{ if } n\notin X  \text{ and } X\neq [0,n-1],\\[1ex]
  [\alpha_{n-1}]+1& \text{ if } X=[0,n-1],\\[1ex]
   [\alpha_{n-1}]+\lvert X\rvert, & \text { otherwise} .
\end{cases}
\]
is also proper.
}\end{proof}

Using the Sandwich theorem, from Theorem \ref{asi} and Proposition \ref{t_ciklogornjagranica} we obtain the asymptotic value of $\gamma_n$.

\begin{cor}\label{cor2} When $n$ tends to infinity, we have that $\gamma_n \sim n \ln n$.
\end{cor}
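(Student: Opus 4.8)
The plan is to obtain the asymptotic $\gamma_n \sim n\ln n$ by squeezing $\gamma_n$ between two sequences that are both asymptotic to $n\ln n$, exactly as in the proof of Theorem~\ref{asi}, but now invoking the two-sided bound from Proposition~\ref{t_ciklogornjagranica} instead of the explicit formula for $\alpha_n$. Concretely, from $\alpha_{n-1}+1\leqslant \gamma_n\leqslant \alpha_{n-1}+n$ we get, after dividing by $n\ln n$,
\[
\frac{\alpha_{n-1}+1}{n\ln n}\;\leqslant\;\frac{\gamma_n}{n\ln n}\;\leqslant\;\frac{\alpha_{n-1}+n}{n\ln n}.
\]

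First I would analyse the two outer expressions. By Theorem~\ref{asi}, $\alpha_{n-1}\sim (n-1)\ln(n-1)$, and since $(n-1)\ln(n-1)\sim n\ln n$ (because $\frac{(n-1)\ln(n-1)}{n\ln n}=\frac{n-1}{n}\cdot\frac{\ln(n-1)}{\ln n}\to 1$), we have $\alpha_{n-1}\sim n\ln n$. Therefore
\[
\frac{\alpha_{n-1}+1}{n\ln n}=\frac{\alpha_{n-1}}{n\ln n}+\frac{1}{n\ln n}\longrightarrow 1,
\qquad
\frac{\alpha_{n-1}+n}{n\ln n}=\frac{\alpha_{n-1}}{n\ln n}+\frac{1}{\ln n}\longrightarrow 1,
\]
the added terms $\tfrac{1}{n\ln n}$ and $\tfrac{1}{\ln n}$ both vanishing as $n\to\infty$. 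Then the Sandwich (squeeze) theorem forces $\frac{\gamma_n}{n\ln n}\to 1$, which is precisely the claim $\gamma_n\sim n\ln n$.

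There is essentially no hard step here: the only mild subtlety worth spelling out is the shift of index from $n-1$ to $n$ inside the logarithm, i.e.\ checking $(n-1)\ln(n-1)\sim n\ln n$, and noting that the gap of width $n-1$ between the lower and upper bounds in Proposition~\ref{t_ciklogornjagranica} is negligible on the scale $n\ln n$ since $n = o(n\ln n)$. Everything else is a direct reuse of Theorem~\ref{asi} and the squeeze theorem, so the corollary follows in a few lines.
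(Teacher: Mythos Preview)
Your proof is correct and follows exactly the same approach as the paper: the paper simply states that the result follows from Theorem~\ref{asi} and Proposition~\ref{t_ciklogornjagranica} via the Sandwich theorem, and you have spelled out the routine details of that deduction.
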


We are ready to introduce a sharper upper bound for $\gamma_n$. Consider the following two sequences  $A_i$ and $B_i$ , $2\leqslant i\leqslant \lceil \frac{n}{2}\rceil +1$, of integers.
\[
\begin{array}{l}
B_i=\left\{
\begin{array}{cl} 0, & \mbox{ if $ i=2$},\\[.5ex]
n+1+B_{i-1}-(i-1)A_{i-1}, & \mbox{ if $2<i\leqslant \lceil \frac{n}{2}\rceil +1$},
\end{array} \right .
\\[4ex]
A_i=\left\{
\begin{array}{cl} \left\lceil\frac{n+1+B_i}{i}\right\rceil, & \mbox{ if $ i\leqslant \lceil
\frac{n}{2}\rceil$},\\[.5ex]
1, & \mbox{ if $n$ is even and $i= \lceil \frac{n}{2}\rceil +1$},
\\[.5ex]0, & \mbox{ if $n$ is odd and $i= \lceil \frac{n}{2}\rceil +1$}.
\end{array} \right .
\end{array}
\]

\begin{thm}\label{t_cikologornja2}{
$\displaystyle \gamma_n\leqslant \sum_{i=2}^{\lceil  \frac{n}{2}\rceil +1}(A_i+1)$.
}
\end{thm}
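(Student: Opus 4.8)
The plan is to mimic the colouring strategy from the associahedron case (Lemma~\ref{lem0}), but organised so that the facets of the $n$-dimensional cyclohedron containing the element $n$ are handled separately from those that do not. The key structural observation is that the facets of the cyclohedron that are segments $[a,b]$ with $0\le a\le b\le n-1$ (i.e.\ genuine subsets of $[0,n-1]$) behave exactly like the facets of the $(n-1)$-dimensional associahedron with respect to the ``precedes'' relation, except that the cyclic addition $\boxplus$ creates additional separations: a segment $[c,d]$ with $c>0$ may now also be separated from $[0,b]$ via the ``follows'' clause when $d\boxplus 1 = \cdots$. The upshot is that, among segments not containing $n$, there is \emph{one more} pair of mutually separated facets of each cardinality than in the associahedron, which is precisely why $n+1$ (rather than $n+3$) is the right row length here and why one gains the ``$+1$'' savings per row compared with the naive bound $\gamma_n \le \alpha_{n-1}+n$ of Proposition~\ref{t_ciklogornjagranica}.

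\noindent First I would set up, for each $i$ with $2\le i\le \lceil n/2\rceil+1$, a row of facets of cardinality $i-1$, consisting of all segments $[a,b]$ of $[0,n-1]$ of that cardinality arranged so that any $i$ consecutive entries are mutually separated, together with (as in the associahedron proof) a few segments of the complementary cardinality $n-i$ appended at the end, and verify using Proposition~\ref{p_tv1ciklo} and Definition~\ref{d_segment} that the row has exactly $n+1$ members and that the ``last $j$ of row $(i)$ plus first $i-j$ of row $(i+1)$'' separation property holds. Colouring each such row greedily, $i$ consecutive facets per colour with borrowing across rows exactly as in the packing $(\ast)$, uses $A_i$ colours on row $(i)$ for the total of $\sum_{i=2}^{\lceil n/2\rceil+1}A_i$ colours, by the same computation that proved Lemma~\ref{lem0} (only with $n+3$ replaced by $n+1$ throughout the recursion defining $A_i,B_i$). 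Then I would account for the facets of the cyclohedron that \emph{do} contain $n$: as already noted in the proof of Proposition~\ref{t_ciklogornjagranica}, any two such facets of equal cardinality are separated (one precedes the other), and there are such facets of each cardinality from $1$ up to $n$; but grouping these into the same rows by cardinality, one such facet per row, costs only one extra colour per row, giving the additional $\sum_{i=2}^{\lceil n/2\rceil+1} 1$ and hence the stated bound $\sum_{i=2}^{\lceil n/2\rceil+1}(A_i+1)$.

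\noindent The main obstacle I anticipate is the bookkeeping needed to confirm that the facets containing $n$ can genuinely be absorbed into the existing rows at a cost of exactly one colour each — that is, that for each cardinality the segment containing $n$ that we add is separated from \emph{all} of the facets it would share a colour with, including the borrowed facets from neighbouring rows. This requires checking the ``follows'' clause of Definition~\ref{d_segment} carefully, since a segment $[a,n]$ follows $[a-1 \boxminus \text{(length)}, a-1]$-type segments, and one must ensure the greedy assignment never places it in a colour class containing a non-separated facet. A secondary technical point is handling the parity split: when $n$ is even the last row $(\lceil n/2\rceil+1)$ contains facets of the middle cardinality and needs the $A_{\lceil n/2\rceil+1}=1$ case, whereas when $n$ is odd that row is essentially empty ($A_{\lceil n/2\rceil+1}=0$) and the ``$+1$'' for the $n$-containing facet of the top cardinality must still be counted correctly; I would treat these two cases in parallel, as in Lemma~\ref{lem0}. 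Once these verifications are in place, the theorem follows by simply adding up the colours used row by row.
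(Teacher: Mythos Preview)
Your decomposition is the wrong one, and the arithmetic does not work out. You split the facets into those that contain $n$ and those that do not, and you claim the latter form rows of length $n+1$ that can be coloured with $\sum A_i$ colours (for the cyclohedron $A_i$, defined with $n+1$ in the recursion). But the facets not containing $n$ are exactly the facets of the $(n-1)$-dimensional associahedron together with the single facet $[0,n-1]$; the associahedron rows have length $(n-1)+3=n+2$, not $n+1$, and $[0,n-1]$ is comparable with (hence not separated from) every other non-$n$-containing facet, so it needs a colour of its own. Thus the non-$n$-containing facets already require at least $\alpha_{n-1}+1$ colours, which for small $n$ strictly exceeds the cyclohedron $\sum A_i$ (e.g.\ for $n=4$ one has $\alpha_3+1=5$ but $\sum A_i=4$). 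Your plan to then absorb the $n$-containing facets at a cost of one colour per row also cannot work as stated: there are $\kappa$ facets of cardinality $\kappa$ containing $n$, not one, and across complementary cardinalities they are not all mutually separated (e.g.\ $[n,n]\subset[1,n]$), so ``one extra colour per row'' covering both cardinalities in a row is impossible.

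The paper's argument uses a different split: by \emph{cardinality}, not by membership of $n$. Each row $(i)$ has a \emph{left-block} consisting of \emph{all} $n+1$ cyclic segments of the small cardinality $i-1$ (including those through $n$) and a \emph{right-block} consisting of \emph{all} $n+1$ cyclic segments of the large cardinality $n-i+1$. The left-blocks are coloured greedily exactly as in the packing $(\ast)$, giving the $\sum A_i$ with the $n+1$ recursion. The ``$+1$'' per row is the single colour used for the entire right-block: the point is that any two distinct cyclic segments of the \emph{same} cardinality $\kappa\ge\lceil(n+1)/2\rceil$ must overlap or be complementary, hence are mutually separated, so one colour suffices for each large cardinality. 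That, not the $n$-membership, is the structural fact driving the bound.
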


\begin{proof}
 If $n=2k-1$, then we consider the following sequences of facets
\[
\begin{array}{lll}
(2) & [0,0],\ldots,[n,n] & [0,n-1],[1,n],\ldots,[n,n-2]
\\
(3) &
[0,1],\ldots,[n,0] & [0,n-2],[1,n-1],\ldots,[n,n-3]
\\[.5ex]
 & \vdots
\\
(k) &
[0,k-2],\ldots,[n,k-3] & [0,k],\ldots,[n,k-1]
\\
(k+1) & [0,k-1],\ldots,[n,k-2].
\end{array}
\]
The rows denoted by $(2)-(k)$ are separated in two blocks, the \emph{left-block} and the \emph{right-block}, while the last
row has only the left-block. Each block in each row has $n+1$ members.

Note that all the members of the last row
are mutually separated, and hence, they can be coloured with the same colour.
Also, note that every $i$ consecutive members of the left-block of the row denoted by $(i)$ are mutually separated, and that the last $j$, where $i>j$, members of the left-block in that row together with the first $i-j$ members of the left-block of the row denoted by $(i+1)$ are mutually separated. It means that we can colour the members of the left-blocks of the rows as in the proof of Lemma~\ref{lem0} and the packing procedure $(\ast)$, where $l_2=\ldots=l_{k+1}=n+1$. Thus, it is
straightforward to see that we need
\[1+\sum\limits_{i=2}^{k}A_i\] colours for a proper colouring of all the members of the left-blocks.
Since for every $2\leq i \leq k$, the members of the right-block of the row denoted by $(i)$ are mutually separated, they can be coloured with the same colour.
This means that we have coloured properly all the facets using exactly $$k+\sum\limits_{i=2}^{k+1}A_i$$ colours.

If $n=2k$, then we consider the same sequences of facets as
above save that the rows denoted by $(k)$ and $(k+1)$ are of the
form:
\[
\begin{array}{lll}
(k) &
[0,k-2],\ldots,[n,k-3] & [0,k+1],\ldots,[n,k]
\\
(k+1) &
[0,k-1],\ldots,[n,k-2] & [0,k],\ldots,[n,k-1].
\end{array}
\]
Observe that the members of the right-block of the row denoted by $(k+1)$ are mutually separated, and can be coloured with the same colour. By repeating the procedure as above, we use
$\sum\limits_{i=2}^{k+1}A_i$ colours for a proper colouring of all the members of the left-blocks and then $k$ colours for all the members of the right-blocks, which delivers the same conclusion as above.
\end{proof}

It can be verified that Theorem~\ref{t_cikologornja2} gives a significantly better upper bound than Proposition~\ref{t_ciklogornjagranica}.
For example, by Proposition~\ref{t_ciklogornjagranica} we have $5 \leq\gamma_4 \leq 8$, $7 \leq\gamma_5 \leq 11$, $9 \leq\gamma_6 \leq 14$, $ 11\leq\gamma_7 \leq 17$, $360 \leq\gamma_{100} \leq 459$, $5809 \leq\gamma_{1000} \leq 6808$, $37028 \leq\gamma_{5000} \leq 42027$, $80967 \leq\gamma_{10000} \leq 90966$,
while by Theorem~\ref{t_cikologornja2} we obtain $\gamma_4 \leq 6$, $\gamma_5 \leq 8$, $\gamma_6 \leq 10$, $\gamma_7 \leq 13$, $\gamma_{100} \leq 406$,$\gamma_{1000} \leq 6302$, $\gamma_{5000} \leq 39520$, $\gamma_{10000} \leq 85958$.

Still, for some $n$, one can improve the upper bounds for $\gamma_n$  given in Theorem~\ref{t_cikologornja2}.
Namely, for $2\leqslant i\leqslant \lceil  \frac{n}{2}\rceil$, let
$$[a_1,b_1]=X_1,\ldots,[a_i,b_i]=X_i$$ be $i$ consecutive facets coloured with the same colour according to the procedure from the proof of Theorem~\ref{t_cikologornja2}, i.e. $X_1,\ldots,X_i$ are
$i$ consecutive members of the left-block of the row denoted by $(i)$, or the last $j$ members of the left-block of that row,
together with the first $i-j$ members of the left-block of the row denoted by $(i+1)$. Recall that $\boxplus$ denotes $+_{n+1}$.
Consider the following sequence of $i-1$ facets
\[
Y_1=[a_i\boxplus 1,a_1\boxplus n],\; Y_2=[a_i\boxplus 2,a_1], \ldots,Y_{i-1}=[a_i\boxplus (i-1),a_1\boxplus (i-3)].
\]
Since $a_i=a_1\boxplus (i-1)$, the cardinality of these facets is $n-i+1$, which implies that they belong to the right-block of the row denoted by $(i+1)$
(when $n=2k-1$ and $i=k$, they belong to the left-block of the last row).
Let us show that the facets $X_1,\ldots,X_i,Y_1,\ldots, Y_{i-1}$ can be coloured by the same colour, i.e. that the colour used for the facets $X_1,\ldots,X_i$, can be also used for the facets $Y_1,\ldots,Y_{i-1}$.
Since $a_1\boxplus n\boxplus 1=a_1$, it is obvious that for every $1\leq l\leq i-1$ the facet $X_l$ follows the facet $Y_l$.
If $X_2$ belongs to row $(i)$, then the following equations hold:
\[
b_2\boxplus 1=b_1\boxplus 2=a_1\boxplus (i-2)\boxplus 2=a_1\boxplus (i-1)\boxplus 1=a_i\boxplus 1,
\]
which entails that $Y_1$ follows $X_{2}$. Otherwise, if $X_2$ belongs to row $(i+1)$,
then $b_2=a_2\boxplus (i-1)=a_1\boxplus(i-1)\boxplus 1=a_i\boxplus 1$, and $Y_1$ overlaps $X_2$. Hence, $X_2$ precedes $Y_1$ and similarly, for every  $1\leq l\leq i-1$,
the facets $X_{l+1}$ precedes $Y_l$. All this, together with the following equations
 \[ a_1\boxplus (i-3)=a_1\boxplus n\boxplus (i-2)=b_1\boxplus n,
\]
implies that for every $m \notin\{l,l+1\}$, either $Y_l$ precedes $X_m$ or $X_m$ precedes $Y_l$.
We conclude that the facets $X_1,\ldots, X_i, Y_1,\ldots, Y_{i-1}$ are mutually separated, and they can be coloured with the same colour.

Note that the segments $Y_0=[a_i,a_1\boxplus (n-1)]$ and  $X_i$ are mutually comparable, as well as the segments $Y_{i}=[a_i\boxplus i,a_1\boxplus (i-2)]$ and $X_1$,
and hence, by Proposition~\ref{p_tv1ciklo}, $Y_0$ and $Y_i$ cannot be included in this set of mutually separated facets.

It remains to conclude that during the colouring of $i$ consecutive members of the left-block of row $(i)$ or of the last $j$ members of the left-block of this row, together with the first $i-j$ members of the left-block of row $(i+1)$, we can colour with the same colour exactly $i-1$ corresponding members of the right-block of row $(i+1)$.
This potentially gives a procedure to spend less than $\lceil \frac{n}{2}\rceil$ colours for the remaining, still uncoloured facets.

Whether that possibility can really be exploited is independent of the number of the remaining facets.
It depends more on what precisely the remaining facets are, i.e., it depends on their characteristics determined by the nature of the dimension~$n$.
For example, assume that for some $1\leq m\leq n+1$, the $m$-th member of each of the right-blocks remains uncoloured. Since they are mutually comparable,
they are not separated and have to be coloured with different colours---there is no improvement.

\begin{exm}
Let $f$ be the proper colouring of the $7$-dimensional cyclohedron described in the proof of Theorem~\ref{t_cikologornja2}, i.e.
\begin{align*}
& f([0,0])=f([1,1])=1,
\\
& f([2,2])=f([3,3])=2,
\\
& f([4,4])=f([5,5])=3,
\\
& f([6,6])=f([7,7])=4,
\\
& f([0,1])=f([1,2])=f([2,3])=5,
\\
&  f([3,4])=f([4,5])=f([5,6])=6,
\\
&  f([6,7])=f([7,0])=f([0,2])=7,
\\
& f([1,3])=f([2,4])=f([3,5])=f([4,6])=8,
\\
&   f([5,7])=f([6,0])=f([7,1])=f([0,3])=9,
\\
& f([0,6])=f([1,7])=\ldots=f([7,5])=10,
\\
& f([0,5])=f([1,6])=\ldots=f([7,4])=11,
\\
& f([0,4])=f([1,5])=\ldots=f([7,3])=12,
\\
& f([1,4])=f([2,5])=\ldots=f([7,2])=13.
\end{align*}

By the procedure described above, there is a proper colouring $h$, which saves one colour, and is defined as follows:
\begin{align*}
& h([0,0])=h([1,1])=h([2,7])=1,
\\
& h([2,2])=h([3,3])=h([4,1])=2,
\\
& h([4,4])=h([5,5])=h([6,3])=3,
\\
& h([6,6])=h([7,7])=h([0,5])=4,
\\
& h([0,1])=h([1,2])=h([2,3])=h([3,7])=h([4,0])=5,
\\
& h([3,4])=h([4,5])=h([5,6])=h([6,2])=h([7,3])=6,
\\
& h([6,7])=h([7,0])=h([0,2])=h([1,5])=h([2,6])=7,
\\
& h([1,3])=h([2,4])=h([3,5])=h([4,6])=h([5,0])=h([6,1])=h([7,2])=8,
\\
& h([5,7])=f([6,0])=h([7,1])=h([0,3])=h([1,4])=h([2,5])=h([3,6])=9,
\\
& h([0,6])=h([1,7])=\ldots=h([7,5])=10,
\\
& h([1,6])=f([3,0])=f([5,2])=f([7,4])=11,
\\
& h([0,4])=f([5,1])=f([4,7])=12.
\end{align*}

One can notice that the total number of colours used for the colouring $h$ is equal to $\alpha_7$, i.e. to the minimal number of colours needed for proper colouring of the $7$-dimensional associahedron. On the other hand, it can be verified that when $n=8$, no colour can be saved,
while, when $n=9$, this procedure saves one colour.
However, we can say nothing more about the relation between $\alpha_n$ and $\gamma_n$, except that these two chromatic numbers are equal for $n\leq 5$.
\end{exm}

\begin{figure}[h]
    \centering
    \begin{tabular}{cc}
\includegraphics[width=0.4\textwidth]{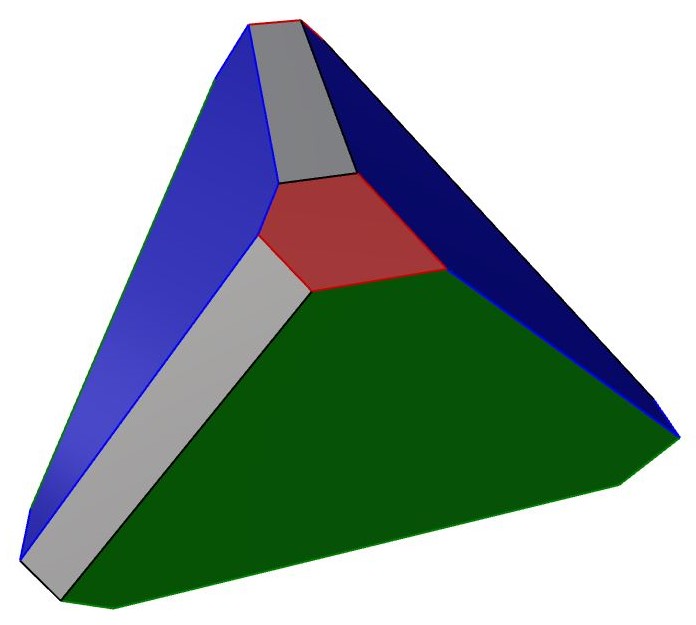}
&
\includegraphics[width=0.4\textwidth]{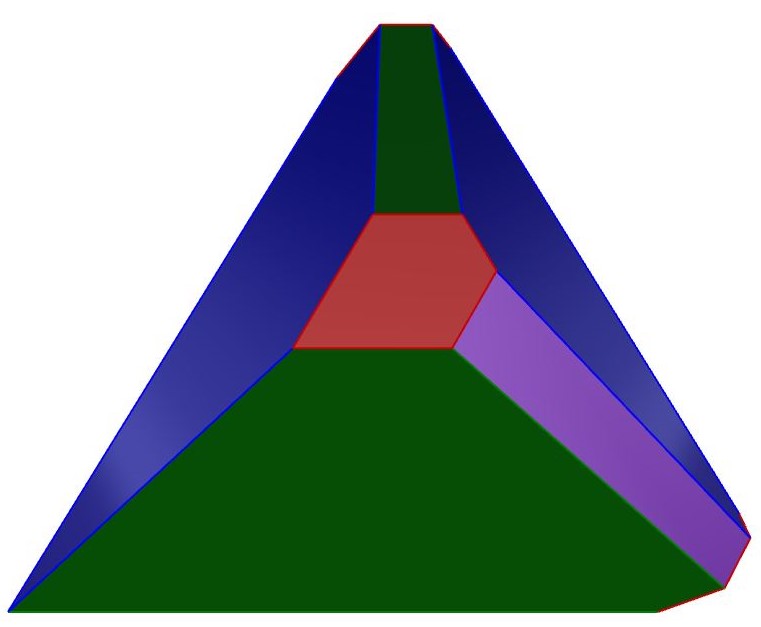}
\end{tabular}
    \caption{Proper colourings of the 3-dimensional cyclohedron and the 3-dimensional stellohedron}
    \label{obojeniCZ}
\end{figure}

\section{Stellohedra}

The facets of the $n$-dimensional stellohedron correspond to the proper subsets
of $[n]_0$ containing $0$ together with the singletons $\{1\},\ldots , \{n\}$. Two facets $X$ and
$Y$ have a common vertex if and only if they are singletons among $\{1\},\ldots, \{n\}$ or comparable.
Let $\sigma_n$  denote the chromatic number of the $n$-dimensional  stellohedron. In order
to give the upper bound for $\sigma_n$ we introduce the following function on
\[
\mathcal{H}=\{Y\subseteq \mathbf{N} \mid 0\in Y \text{ and } Y \text{ is finite}\}.
\]

By induction on $k =  \lvert Y- \{0\}  \rvert$,
we define $h: \mathcal{H} \rightarrow \mathbf{N}^+$ and the set $ F_Y \subseteq \mathbf{N}^+$, and
we show that
$$
\begin{array}{ll}
(\ast) &  h(Y)\notin F_Y \text{ and }   F_Y \cup \{h(Y)\} \text{ consists of the elements of  }
\\[.5ex] & (Y)_1^0=(Y- \{0\}) \cup \{1\}\text{ and at most k initial elements of } \mathbf{N}^+-(Y)_1^0.
\end{array}$$

For $k=0$, we define $F_{\{0\}}=\emptyset$, $ h(\{0\})=1$ and it is straightforward to see that
this satisfies $(\ast)$. Assume that $F_Y$ and $h(Y)$, which satisfy $(\ast)$, are defined for every
$Y \in \mathcal{H}$ such that $\lvert Y- \{0\}  \rvert<k$. For $Y=\{0,i_1,\ldots,i_k\}$, let
$Y_j=Y-\{i_j\}$ and let $Z_j=F_{Y_j}\cup \{h(Y_j)\}$, $1\leq j\leq k$. We define
\[
F_Y=(Y)_1^0 \cup (\bigcup \{Z_j \mid 1 \leq j \leq k\}), \quad \quad h(Y)=\min (\mathbf{N}^+-F_Y).
\]

For example, it follows that $F_{\{0,1\}}=\{1\}$, $h(\{0,1\})=2$,  $F_{\{0,2\}}=\{1,2\}$, $h(\{0,2\})=3$,
and for $i\geq 3$, $F_{\{0,i\}}=\{1,i\}$, $h(\{0,i\})=2$. Also, $F_{\{0,4,7\}}=\{1,2,4,7\}$, $h(\{0,4,7\})=3$.

By the induction hypothesis, every $Z_j$ consists of the elements of $(Y_j)_1^0$ together with at most $k-1$ initial elements of $\mathbf{N}^+-(Y_j)_1^0$,
and these $k-1$ elements belong to the union of $(Y)_1^0$ and the set of initial $k-1$ elements of $\mathbf{N}^+-(Y)_1^0$.
Hence, $F_Y$ is the subset of this union, and it is straightforward to see that it consists of the elements of $(Y)_1^0$ and at most $k-1$ initial elements of $\mathbf{N}^+-(Y)_1^0$.
When we add $h(Y)$ to this set, we see that $(\ast)$  holds. Note that from $(\ast)$ it follows that $h(Y)\leq 2k+1$ when $\lvert Y-\{0\}\rvert=k$.

\begin{rem}\label{zvezdrem}
If $Y,Z \in \mathcal{H}$ and $Z \subset Y$, then $h(Z)\in F_Y$, hence $h(Y)\neq h(Z)$. If $0\neq i \in Y \in \mathcal{H}$, then $i \in F_Y$, hence $h(Y)\neq i$.
\end{rem}

For $\mathcal{F}$ being the set of facets of the $n$-dimensional stellohedron, consider the following function
$$f:\mathcal{F}\rightarrow[2n-1- \lfloor {\frac{n-1}{2}}\rfloor].$$
Let $f(\{0\}=f(\{1\}))=1$ and for $2 \leq i \leq n$, let $f(\{i\})=i$. For $1 \leq k \leq \lfloor {\frac{n-1}{2}}\rfloor $ and the face $Y$ such that $\lvert Y-\{0\}\rvert = k$, let $f(Y)=h(Y)$
(note that in this case $h(Y)\in [n]$). For $k>\lfloor {\frac{n-1}{2}}\rfloor$ and $Y$ such that $\lvert Y-\{0\}\rvert=k$, let $f(Y)=n+k-\lfloor {\frac{n-1}{2}}\rfloor$.

\begin{thm}\label{steloup}
The function $f$ is a proper colouring.
\end{thm}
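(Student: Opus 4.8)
The plan is to verify that $f$ is well defined (lands in the claimed codomain) and then to check, for every pair of distinct facets $X,Y$ receiving the same $f$-value, that $X$ and $Y$ are separated, i.e.\ that they are singletons among $\{1\},\ldots,\{n\}$ or comparable. I would organise the argument according to which of the three ``regimes'' $X$ and $Y$ fall into: (a) both are singletons $\{i\}$ with $i\geq 1$ (or $\{0\}$); (b) at least one of them is a face $Y$ with $1\leq|Y-\{0\}|\leq\lfloor(n-1)/2\rfloor$, coloured by $h(Y)$; (c) at least one of them is a ``large'' face with $|Y-\{0\}|>\lfloor(n-1)/2\rfloor$, coloured by $n+k-\lfloor(n-1)/2\rfloor$.

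First I would dispose of the easy regimes. In regime (c), two large faces $X,Y$ with the same colour must have $|X-\{0\}|=|Y-\{0\}|=k$ for the same $k$; since both contain $0$, if they were incomparable their union would be a facet properly containing each, which is impossible for equal-cardinality distinct sets, so in fact incomparable equal-cardinality sets never get the same large colour --- wait, that needs care, so more precisely: equal cardinality and distinct forces incomparability, but then they are \emph{not} separated only if \dots\ here I must use Proposition~\ref{tv1}: for the stellohedron $X$ and $Y$ are separated iff incomparable and $X\cup Y$ is a connected subset containing $0$, which it always is (it contains $0$ and is a proper subset as long as $X\cup Y\neq[n]_0$). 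The genuine point is therefore that the codomain size $n+k-\lfloor(n-1)/2\rfloor$ for $k$ ranging in $(\lfloor(n-1)/2\rfloor,n-1]$ does not exceed $2n-1-\lfloor(n-1)/2\rfloor$, and that large colours are disjoint from the range $[n]$ used by the singletons and the $h$-values; both are arithmetic checks. A large face and a small face, or a large face and a singleton, get the same colour only if $n+k-\lfloor(n-1)/2\rfloor\leq n$, i.e.\ never, so no cross-regime clashes arise with (c).

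The substance is regime (b): two faces $X,Y$ with $h(X)=h(Y)$, and the mixed case of a face $Y$ and a singleton $\{i\}$ with $h(Y)=i$. The mixed case is immediate from Remark~\ref{zvezdrem}: if $0\neq i\in Y$ then $h(Y)\neq i$, so $h(Y)=f(\{i\})=i$ forces $i\notin Y$, and then $\{i\}$ and $Y$ are incomparable with $\{i\}\cup Y$ a connected set containing $0$ --- separated, as required. For two small faces, I would argue by contradiction: suppose $X\not\subseteq Y$ and $Y\not\subseteq X$ but $h(X)=h(Y)$. Pick $i\in X-Y$; then $X_j:=X-\{i\}\subseteq Y$? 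Not necessarily. The right move is the defining recursion: $F_X$ was built to contain $(X)_1^0$ together with the values $h(X-\{i_j\})$ over all one-element deletions, and $h(X)=\min(\mathbf N^+-F_X)$. The key structural fact, packaged in Remark~\ref{zvezdrem}, is that $Z\subset Y\Rightarrow h(Z)\in F_Y$. So if $X\subsetneq Y$ then $h(X)\in F_Y$ hence $h(X)\neq h(Y)$ --- comparable faces get distinct colours, good, but that is the \emph{easy} direction. What I actually need is that \emph{incomparable} small faces may freely share a colour, which is automatic once one knows they are separated; and incomparability plus the union being a connected set containing $0$ and a proper subset of $[n]_0$ is exactly separation by Proposition~\ref{tv1}. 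The only thing to rule out is $X\cup Y=[n]_0$, but $|X-\{0\}|,|Y-\{0\}|\leq\lfloor(n-1)/2\rfloor$ gives $|X\cup Y|\leq 1+2\lfloor(n-1)/2\rfloor\leq n<n+1=|[n]_0|$, so the union is always a proper subset; hence any two incomparable small faces are separated and no constraint is violated.

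The hard part, and the reason the elaborate inductive construction of $h$ and $F_Y$ with property $(\ast)$ is present, is exactly the \emph{comparable} subcase together with the face-versus-singleton clash: one must know that $h$ never repeats a value along a chain and never collides with the label of an element it contains, which is precisely Remark~\ref{zvezdrem}, itself a consequence of $(\ast)$ (namely $F_Y\cup\{h(Y)\}\supseteq(Y)_1^0$ and $h(Z)\in F_Y$ whenever $Z\subset Y$). So the proof reduces to: (1) check $h(Y)\leq 2k+1\leq n$ for $k\leq\lfloor(n-1)/2\rfloor$, so the small-face colours lie in $[n]$ --- this is the displayed consequence of $(\ast)$ already established in the text; (2) invoke Remark~\ref{zvezdrem} to handle comparable faces and face/element collisions; (3) observe that all remaining same-colour pairs are incomparable with proper connected union containing $0$, hence separated by Proposition~\ref{tv1}; (4) do the arithmetic for the large-face block and its disjointness from $[n]$. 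I expect step (1)/(2) packaging --- making sure every collision is genuinely covered by one of these cases and that the case split is exhaustive over the three regimes --- to be the only place demanding real care.
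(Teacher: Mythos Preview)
Your approach is essentially the paper's: the only nontrivial check is that comparable facets $Z\subset Y$ with $Y$ small receive distinct colours, and this is exactly Remark~\ref{zvezdrem}. The paper's proof is a single sentence to that effect; your longer case analysis is correct but carries some unnecessary baggage.

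Specifically, you need not route through Proposition~\ref{tv1} at all: the paper has already recorded, at the start of the stellohedron section, that two facets share a vertex iff they are comparable or both among $\{1\},\ldots,\{n\}$. With that criterion in hand, your worry about whether $X\cup Y=[n]_0$ is moot (and in any case $[n]_0\in\mathcal{B}$, so the union being all of $[n]_0$ would still count as ``in the building set''). Once you use the direct criterion, regimes (a) and (c) collapse to arithmetic (distinct singletons get distinct colours; large colours exceed $n$; equal-cardinality distinct sets are incomparable), and regime (b) reduces to: comparable small faces get distinct $h$-values and $h(Y)\neq i$ whenever $i\in Y$, both of which are Remark~\ref{zvezdrem}. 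That is the paper's one-line proof.
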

\begin{proof}
It suffices to see that for two faces $Y$ and $Z$ such that $Z \subset Y \in \mathcal{H}$ and $\lvert Y-\{0\}\rvert \leq \lfloor {\frac{n-1}{2}}\rfloor$ we have that $f(Y)\neq f(Z)$,
which follows from Remark~\ref{zvezdrem}.
\end{proof}

The stellohedron has pentagonal 2-faces, so by \cite[Theorem~16]{Jos02} it requires at least $n+1$ colors for a proper colouring of its facets. This observation and Theorem \ref{steloup} are summarized in the next corollary, with an open question whether the lower bound can be further improved for sufficiently large $n$.

\begin{cor} $n+1 \leq \sigma_n \leq 2n-1-\lfloor {\frac{n-1}{2}}\rfloor .$
\end{cor}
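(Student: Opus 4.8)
The plan is to read off both inequalities from material already established. The upper bound is immediate: Theorem~\ref{steloup} exhibits a proper colouring $f\colon\mathcal{F}\to[2n-1-\lfloor\frac{n-1}{2}\rfloor]$ of the facets of the $n$-dimensional stellohedron, so by Definition~\ref{bojenje} the chromatic number $\sigma_n$ cannot exceed the cardinality $2n-1-\lfloor\frac{n-1}{2}\rfloor$ of the codomain. (That $f$ really lands in this set is built into its construction: on a face $Y$ with $|Y-\{0\}|=k\leq\lfloor\frac{n-1}{2}\rfloor$ the bound $h(Y)\leq 2k+1\leq n$ applies, and on the remaining faces the assigned colour $n+k-\lfloor\frac{n-1}{2}\rfloor$ is largest when $k=n-1$, giving exactly $2n-1-\lfloor\frac{n-1}{2}\rfloor$.)

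For the lower bound I would invoke \cite[Theorem~16]{Jos02}: a simple $d$-polytope that has a $2$-face which is a polygon with an odd number of edges needs at least $d+1$ colours for a proper facet colouring, the point being that the edges of an odd polygon already require three colours although the polygon is only $2$-dimensional. It therefore suffices to produce an odd $2$-face of the $n$-dimensional stellohedron. A $2$-face corresponds to a nested set of size $n-2$, and for the star graph with centre $0$ I would take
\[
N=\bigl\{\{0,1,\dots,j\}\mid 2\leq j\leq n-1\bigr\},
\]
which is a chain of proper connected subsets, hence vacuously a nested set. Using the product decomposition of a face of a nestohedron into nestohedra coming from restrictions and contractions of the building set (as in \cite{P09}, \cite{DP10c}), every factor associated with a set $\{0,1,\dots,j\}$ with $j\geq 3$, and the top factor as well, is built on a one-element ground set and so collapses to a point, while the bottom set $\{0,1,2\}$ contributes the building set of the path $1-0-2$, whose nestohedron is the $2$-dimensional associahedron, a pentagon. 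Thus this $2$-face is a pentagon and $\sigma_n\geq n+1$ follows. For $n=2$ the stellohedron is itself the pentagon $1-0-2$, so the claim holds there too.

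The only step with any content is the identification of a pentagonal $2$-face, and even that is routine once the restriction/contraction description of faces of nestohedra and the description of the building set of the stellohedron from Section~6 are used; I do not anticipate a genuine obstacle. Note that the corollary leaves open whether $n+1$ is the exact value for large $n$, and the argument above gives no information in that direction.
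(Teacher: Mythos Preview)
Your argument is correct and follows the same route as the paper: the upper bound comes straight from Theorem~\ref{steloup}, and the lower bound from Joswig's theorem applied to a pentagonal $2$-face. The paper simply asserts that the stellohedron has pentagonal $2$-faces, while you supply an explicit one via the chain $N=\{\{0,1,\dots,j\}:2\le j\le n-1\}$; your identification of this face with the $2$-dimensional associahedron on the path $1\!-\!0\!-\!2$ is correct, since the only building-set elements compatible with $N$ are the proper connected subsets of $\{0,1,2\}$ in the star graph. The phrasing ``one-element ground set'' for the higher factors is right under the contraction convention in which $\mathcal{B}/S$ lives on $[n]_0\setminus S$ (so each intermediate factor sits over the single new vertex $j$), though it might read more transparently to argue directly, as above, that no element outside $\mathcal{P}(\{0,1,2\})$ can extend $N$.
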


\section{Permutoassociahedra}

The last section of this paper is devoted to a family of polytopes whose construction is similar to the construction of nestohedra, but it is slightly more general. This family is presented in details in \cite{BIP17} and its geometric realisation through Minkowski sum of polytopes is obtained in \cite{I20}. The construction of such polytopes is a consequence of the possibility to iterate the standard nested-set construction (see \cite{P14}). One can notice an analogy between the polytopes obtained by the iterated nested-set construction and geometric realisations of finite simplicial complexes appearing in cluster algebras (see \cite{CFZ02} and \cite{FZ02}). We emphasize an evident correspondence between the notation used for the facets of both families. Our plan is to explore this analogy in a future work.

In order to introduce a family of polytopes obtained by an iterated nested-set construction, we have to define the notion of building sets of simplicial complexes and to adjust the definition of complexes of nested sets. These notions are just restrictions of the notions defined in \cite{FK04} for arbitrary finite-meet semilattices.

\begin{dfn}\label{d1}
A collection $\mathcal{B}$ of nonempty subsets of a finite set
$V$ containing all singletons $\{v\}, v\in V$ and satisfying that
for any two sets $S_1, S_2\in \mathcal{B}$ such that $S_1\cap S_2
\neq\emptyset$, their union $S_1\cup S_2$ also belongs to
$\mathcal{B}$, is called a \textit{building set} of
$\mathcal{P}(V)$. Let $K$ be a simplicial complex and let $V_1,\ldots,V_m$ be the
maximal simplices of $K$. A collection $\mathcal{B}$ of
simplices of $K$ is called a \emph{building set} of $K$ when for
every $1\leq i\leq m$, the collection
$$\mathcal{B}^{V_i}=\mathcal{B} \cap \mathcal{P}(V_i)$$ is a
building set of $\mathcal{P}(V_i)$.

We recall that for a family of sets $N$, we say that $\{X_1,\ldots,X_m\}\subseteq
N$ is an $N$-\emph{antichain}, when $m\geq 2$ and $X_1,\ldots,X_m$
are mutually incomparable.

Let $\mathcal{B}$ be a building set of a simplicial complex $K$.
We say that $N\subseteq \mathcal{B}$ is a \emph{nested set} with
respect to $\mathcal{B}$ when the union of every $N$-antichain is
an element of $K-\mathcal{B}$.
\end{dfn}

Let $C_0$ be the simplicial complex $\mathcal{P}([n]_0)-\{[n]_0\}$, i.e. the \emph{boundary complex}
$\partial \Delta^n$ of the abstract $n$-simplex $\Delta^n$, and let
$C_1$ be the simplicial complex of nested sets corresponding to
the complete graph on $[n]_0$, i.e.\ the one whose realisation is
the $n$-dimensional permutohedron. A nested set $A\in C_1$ is of the form $\{X_1,\ldots,X_l\}$, where
\[[n]_0\supset X_1\supset\ldots\supset X_l\supset\emptyset.\]

For a permutation
$\pi\colon [n]_0\rightarrow [n]_0$ and $A\in C_1$ let
\[
A^\pi=\{\{\pi(i_1),\ldots,\pi(i_k)\}\mid\{i_1,\ldots,i_k\}\in A\}.
\]
Let $\mathcal{B}_1\subseteq C_1$ be the building set of $C_1$ given by the family of all sets of the form
\[
\bigl\{
\{i_{k+l},\ldots,i_k,\ldots,i_1\},\ldots,\{i_{k+l},\ldots,i_k,i_{k-1}\},\{i_{k+l},\ldots,i_k\}
\bigr\},
\]
where $1\leq k\leq k+l\leq n$ and $i_1,\ldots,i_{k+l}$ are
mutually distinct elements of $[n]_0$. It is clear that
\[
(\dagger)\quad\quad\mbox{\rm if}\; A\in \mathcal{B}_1,\; \mbox{\rm
then}\; A^\pi\in \mathcal{B}_1\; \mbox{\rm for every permutation}\; \pi
\]
holds for $\mathcal{B}_1$.

The nested sets with respect to $\mathcal{B}_1$ are called
1-\emph{nested sets} and the complex of 1-nested sets is denoted
by $C$. The existence of a simple $n$-dimensional polytope that geometrically realises $C$ is shown in \cite[Theorem~5.2]{BIP17} (see also \cite{P14} and \cite{I20}). This polytope is called \emph{simple permutoassociahedron} and it is denoted by $PA_n$. The rest of this section examines its chromatic number.

\begin{rem} Simple permutoassociahedron $PA_n$ appears as the orbit space of torus action $T^n$ on a smooth projective toric variety which is a manifold. The object more known in the literature, the permutoassociahedron introduced by Kapranov in \cite{Kap93} is not simple, therefore does not posses these properties. However, finding the chromatic number of Kapranov's permutoassociahedron is still natural and interesting question for itself.
\end{rem}

\begin{dfn}\label{d3}
Let $\mathcal{E}$ be a subset of $\mathcal{B}_1$. We say that a function $f\colon \mathcal{E}\rightarrow[m]$ is \emph{proper},
when for all $A,B\in \mathcal{E}$ it holds that
\[
\mbox{\rm if}\; f(A)=f(B),\; \mbox{\rm then}\; A \; \mbox{\rm
and}\; B \; \mbox{\rm do not belong to the same 1-nested set}.
\]
\end{dfn}

Since the facets of the simple permutoassociahedron correspond to the elements of $\mathcal{B}_1$, it is not difficult to conclude that if $f\colon\mathcal{B}_1\rightarrow [m]$ is proper, then it corresponds
to a proper colouring of the facets of this polytope. Let $M$ be
the following maximal nested set in $C_1$
\[
\{\{n-1,\ldots,0\},\ldots,\{n-1,n-2\},\{n-1\}\}.
\]
Recall that $\mathcal{B}_1^M$ is $\mathcal{B}_1 \cap \mathcal{P}(M)$.

\begin{prop}\label{lokalizacija}
Let $f\colon\mathcal{B}_1\rightarrow[m]$ be a function such that $f(A)=f(A^\pi)$ for every $A\in \mathcal{B}_1$ and
every permutation $\pi$. If $f$ restricted to $\mathcal{B}_1^M$ is proper, then $f$ is proper.
\end{prop}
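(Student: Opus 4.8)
The plan is to reduce ``$f$ is proper'' to the already-assumed statement ``$f$ restricted to $\mathcal{B}_1^M$ is proper'' by a symmetry argument: given any two facets $A,B\in\mathcal{B}_1$ lying in a common 1-nested set $N\in C$, I want to produce a single permutation $\pi$ such that $A^{\pi^{-1}},B^{\pi^{-1}}\in\mathcal{B}_1^M$ and such that they still lie in a common 1-nested set. Since $f(A)=f(A^{\pi^{-1}})$ and $f(B)=f(B^{\pi^{-1}})$ by hypothesis, and since properness of $f|_{\mathcal{B}_1^M}$ forces $f(A^{\pi^{-1}})\neq f(B^{\pi^{-1}})$, we get $f(A)\neq f(B)$, which is exactly what Definition~\ref{d3} demands. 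The key point is therefore: \emph{every pair of facets contained in a common 1-nested set can be simultaneously moved, by one permutation, into $\mathcal{B}_1^M$.}

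First I would recall the structure of $1$-nested sets. A $1$-nested set $N$ is built over an underlying nested set in $C_1$, i.e.\ over a chain $[n]_0\supset X_1\supset\cdots\supset X_l\supset\emptyset$; each element of $N$ is an element of $\mathcal{B}_1$ that is itself a chain of subsets of $[n]_0$, and (by Definition~\ref{d1} applied to the building set $\mathcal{B}_1$ of $C_1$) the elements of $N$ are compatible in the sense that their union-of-antichains stays outside $\mathcal{B}_1$. Concretely, if $A,B\in N$ with $A,B\in\mathcal{B}_1$, then $A$ and $B$ are chains of subsets, and the ``flag'' data underlying $N$ refines the common structure: there is a maximal chain $[n]_0\supset Y_1\supset\cdots\supset Y_{n}\supset\emptyset$ in $C_1$ (a complete flag, i.e.\ a linear order $j_0,j_1,\dots,j_n$ of $[n]_0$ with $Y_t=\{j_t,j_{t+1},\dots,j_n\}$) such that both $A$ and $B$ occur among the sets of the form described in the definition of $\mathcal{B}_1$ relative to that flag. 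This is the crucial combinatorial observation: \textbf{any 1-nested set is contained in a maximal 1-nested set, and a maximal 1-nested set is, up to relabelling, exactly of the form of $M$.} Indeed $M$ corresponds to the flag $n-1,n-2,\dots,0$ (reading $\{n-1,\dots,0\}\supset\{n-1,\dots,1\}\supset\cdots\supset\{n-1\}$), together with the relevant members of $\mathcal{B}_1^M$.

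Granting that, here is the argument in order. (1) Let $A,B\in\mathcal{B}_1$ belong to a common $1$-nested set $N$. Extend $N$ to a maximal $1$-nested set $N'\supseteq N$; let $j_0,j_1,\dots,j_n$ be the linear order of $[n]_0$ encoding the complete flag underlying $N'$, so that the maximal chain is $\{j_n\}\subset\{j_{n-1},j_n\}\subset\cdots\subset\{j_0,\dots,j_n\}$. (2) Let $\pi$ be the permutation of $[n]_0$ with $\pi(n-t)=j_t$ for $0\le t\le n$, i.e.\ $\pi$ sends the flag of $M$ onto the flag of $N'$. Then $\pi$ carries $M$ onto $N'$: $M^{\pi}=N'$, hence $N'^{\,\pi^{-1}}=M$ and in particular $A^{\pi^{-1}},B^{\pi^{-1}}\in M\subseteq\mathcal{B}_1^M$. (3) Since applying a permutation to all members of a $1$-nested set yields a $1$-nested set (the definitions of $C_1$, $\mathcal{B}_1$ and of $1$-nested set are manifestly permutation-equivariant, which is essentially property $(\dagger)$ together with the evident equivariance of the nested-set condition), $A^{\pi^{-1}}$ and $B^{\pi^{-1}}$ lie in the common $1$-nested set $N^{\pi^{-1}}\subseteq M$. (4) By properness of $f|_{\mathcal{B}_1^M}$, $f(A^{\pi^{-1}})\neq f(B^{\pi^{-1}})$. (5) By the hypothesis $f(C)=f(C^\sigma)$ for all $C\in\mathcal{B}_1$ and all permutations $\sigma$ (apply it with $\sigma=\pi^{-1}$, or note $f(C^{\pi^{-1}})=f((C^{\pi^{-1}})^{\pi})=f(C)$), we get $f(A)=f(A^{\pi^{-1}})\neq f(B^{\pi^{-1}})=f(B)$. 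Hence $f$ is proper.

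The main obstacle is step (1)–(2): proving cleanly that every $1$-nested set is contained in one whose underlying flag is complete, and that \emph{all} maximal $1$-nested sets are permutation-images of the specific $M$ written in the statement. This is where one must unwind the definition of $\mathcal{B}_1$ (the explicit ``staircase'' families $\{\{i_{k+l},\dots,i_1\},\dots,\{i_{k+l},\dots,i_k\}\}$) and of the $1$-nested-set condition, to see that a maximal collection of pairwise-compatible such staircases is precisely pinned down by a single linear order on $[n]_0$ — equivalently, that the facets of $PA_n$ through a given vertex are governed by one complete flag, which is the combinatorial heart of the iterated nested-set construction of \cite{BIP17}. Everything after that is the formal equivariance bookkeeping above.
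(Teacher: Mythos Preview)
Your approach is essentially the paper's: pick one permutation that simultaneously carries $A$ and $B$ into $\mathcal{B}_1^M$, then use equivariance of $f$ and properness of $f|_{\mathcal{B}_1^M}$. The paper does this in one stroke: since the union of the chains in a $1$-nested set $N$ is again a chain (an element of $C_1$), it lies inside some maximal chain $M^{\pi^{-1}}$, so $N\subseteq\mathcal{P}(M^{\pi^{-1}})$ and hence $A^\pi,B^\pi\in\mathcal{B}_1\cap\mathcal{P}(M)=\mathcal{B}_1^M$; then $N^\pi$ is a $1$-nested set by $(\dagger)$ and the conclusion follows.

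Your write-up takes an unnecessary detour through maximal $1$-nested sets and, in doing so, conflates two levels: $M$ is a maximal \emph{simplex of $C_1$} (a chain of proper subsets of $[n]_0$), not a maximal $1$-nested set, so the line ``$M^\pi=N'$, hence $A^{\pi^{-1}},B^{\pi^{-1}}\in M\subseteq\mathcal{B}_1^M$'' is a type error on both sides. What you need is $A^{\pi^{-1}},B^{\pi^{-1}}\subseteq M$, i.e.\ $A^{\pi^{-1}},B^{\pi^{-1}}\in\mathcal{P}(M)\cap\mathcal{B}_1=\mathcal{B}_1^M$. Once that is fixed the argument goes through, but notice that the ``main obstacle'' you flag (classifying maximal $1$-nested sets) simply does not arise in the paper's version: one only needs that every simplex of $C_1$ extends to a maximal one, and that all maximal simplices of $C_1$ are permutation images of $M$, both of which are immediate from the description of $C_1$ as chains of proper subsets of $[n]_0$.
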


\begin{proof}
Let $A,B\in \mathcal{B}_1$ be such that both belong to a 1-nested
set $N$. The definition of $C_1$ guarantees that there is a
permutation $\pi$ such that $N\in
\mathcal{P}(M^{\pi^{-1}})$. Both $A^\pi$ and $B^\pi$ belong to
$N^\pi$, which is a 1-nested set by $(\dagger)$. Hence,
$f(A^\pi)\neq f(B^\pi)$ holds, and therefore $f(A)\neq f(B)$.
\end{proof}

\begin{prop}
If $A_1$ and $A_2$ are two different members of $\mathcal{B}_1^M$,
then for arbitrary permutations $\pi_1$ and $\pi_2$ we have $A_1^{\pi_1}\neq A_2^{\pi_2}$.
\end{prop}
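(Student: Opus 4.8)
The plan is to make the members of $\mathcal{B}_1^M$ completely explicit and then to observe that the action $A\mapsto A^{\pi}$ preserves the cardinalities of the sets occurring in $A$, which already separates distinct members of $\mathcal{B}_1^M$.

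First I would unwind the definitions. Writing $S_m=\{n-1,n-2,\ldots,n-m\}$ for $1\le m\le n$, we have $M=\{S_1,\ldots,S_n\}$ with $S_1\subset S_2\subset\cdots\subset S_n$ and $|S_m|=m$. By the description of $\mathcal{B}_1$, every $A\in\mathcal{B}_1$ is a chain $T_1\subset\cdots\subset T_k$ of nonempty proper subsets of $[n]_0$ with $|T_{t+1}|=|T_t|+1$ for each $t$. Intersecting with $\mathcal{P}(M)$, any $A\in\mathcal{B}_1^M$ is a subchain of $M$, and since the $S_m$ have pairwise distinct cardinalities, the condition $|T_{t+1}|=|T_t|+1$ forces $A$ to consist of consecutive members of $M$. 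Thus every $A\in\mathcal{B}_1^M$ has the form $A=\{S_j,S_{j+1},\ldots,S_{j'}\}$ for some $1\le j\le j'\le n$, and moreover $j$ and $j'$ are recovered from $A$ as the smallest and the largest cardinality of a member of $A$; in particular, distinct elements of $\mathcal{B}_1^M$ correspond to distinct pairs $(j,j')$.

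Next, writing $\pi(X)=\{\pi(x):x\in X\}$ so that $A^{\pi}=\{\pi(X):X\in A\}$, note that for any permutation $\pi$ of $[n]_0$ and any $X\subseteq[n]_0$ one has $|\pi(X)|=|X|$, and $\pi$ is injective on subsets; hence $A^{\pi}$ has the same number of members as $A$, with the same multiset of cardinalities. So for $A=\{S_j,\ldots,S_{j'}\}$ the members of $A^{\pi}$ have cardinalities exactly $j,j+1,\ldots,j'$, one of each, regardless of $\pi$. Now let $A_1,A_2\in\mathcal{B}_1^M$ be distinct, say $A_1=\{S_j,\ldots,S_{j'}\}$ and $A_2=\{S_p,\ldots,S_{p'}\}$ with $(j,j')\neq(p,p')$. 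If $A_1^{\pi_1}=A_2^{\pi_2}$ held, then the cardinalities occurring in this common set would satisfy $\{j,j+1,\ldots,j'\}=\{p,p+1,\ldots,p'\}$; but an interval of integers is determined by its endpoints, so $j=p$ and $j'=p'$, a contradiction. Therefore $A_1^{\pi_1}\neq A_2^{\pi_2}$ for all permutations $\pi_1,\pi_2$.

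The only step requiring real care is the explicit description of $\mathcal{B}_1^M$: one must check that the condition in the definition of $\mathcal{B}_1$ that consecutive members of the chain differ by exactly one element really does pin $A$ down to a block $\{S_j,\ldots,S_{j'}\}$ of consecutive sets of $M$, and that the endpoints $j,j'$ are intrinsic invariants of $A$. After that, cardinality-preservation of the permutation action closes the argument immediately, and one never has to examine the permutations $\pi_1,\pi_2$ any further.
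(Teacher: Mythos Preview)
Your proof is correct and rests on the same invariant as the paper's: the multiset of cardinalities of the members of $A$ is unchanged by $A\mapsto A^{\pi}$, and since $M$ contains exactly one set of each cardinality $1,\ldots,n$, distinct subsets of $M$ are distinguished by their cardinality profiles. The paper's version is a touch more economical in that it does not first characterise $\mathcal{B}_1^M$ as intervals $\{S_j,\ldots,S_{j'}\}$; it simply observes that if $A_1\neq A_2$ are subsets of $M$ then some $S_k$ lies in one and not the other, hence one of $A_1,A_2$ contains a set of cardinality $k$ while the other does not, and this property survives under any permutations applied to each.
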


\begin{proof}
Since $A_1\neq A_2$ and $A_1, A_2\in \mathcal{B}_1^M$, there is
$k\in\{1,\ldots,n\}$ such that one of $A_1, A_2$ contains a set from $M$ of cardinality $k$, while the other does not
contain a set of this cardinality. Hence, the same holds for $A_1^{\pi_1}$ i $A_2^{\pi_2}$.
\end{proof}

\begin{cor}\label{ekstenzija}
For an arbitrary function $f^M\colon \mathcal{B}_1^M\rightarrow
[m]$ there exists its extension $f\colon \mathcal{B}_1\rightarrow
[m]$, which satisfies $f(A^\pi)=f^M(A)$ for every $A\in
\mathcal{B}_1^M$ and every permutation $\pi$.
\end{cor}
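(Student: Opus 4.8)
The plan is to construct $f$ by transporting $f^M$ along the permutation action $A\mapsto A^\pi$: I would set $f(B)=f^M(A)$ whenever a facet $B\in\mathcal{B}_1$ can be written as $B=A^\pi$ for some $A\in\mathcal{B}_1^M$ and some permutation $\pi$. Two things then need to be verified --- that \emph{every} $B\in\mathcal{B}_1$ admits such a representation (so $f$ is defined everywhere on $\mathcal{B}_1$), and that $f^M(A)$ does not depend on the chosen representative $A$ (so $f$ is well defined). The second of these is exactly the contrapositive of the proposition proved immediately above.

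For the first point I would make the shape of the members of $\mathcal{B}_1$ fully explicit: a member of $\mathcal{B}_1$ is a chain $S_0\subset S_1\subset\cdots\subset S_{k-1}$ of subsets of $[n]_0$ with $|S_j|=|S_0|+j$ and $|S_{k-1}|\leq n$; equivalently it is determined by a nonempty ``base'' $T=S_0$ together with an ordered list $j_1,\ldots,j_{k-1}$ of distinct elements of $[n]_0-T$ (the elements adjoined one at a time), subject to $|T|+(k-1)\leq n$. In this language, $\mathcal{B}_1^M=\mathcal{B}_1\cap\mathcal{P}(M)$ is precisely the set of those members whose base is the initial segment $\{n-1,n-2,\ldots,n-|T|\}$ of the chain $M$ and whose adjoined list is $n-|T|-1,\,n-|T|-2,\ldots$, i.e.\ the canonical descending subchain of $M$ of the appropriate length.

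Given an arbitrary $B\in\mathcal{B}_1$ with base $T$ and adjoined list $j_1,\ldots,j_{k-1}$, I would choose a permutation $\sigma$ of $[n]_0$ that carries $T$ bijectively onto $\{n-1,\ldots,n-|T|\}$ and sends $j_r$ to $n-|T|-r$ for each $r$; such a $\sigma$ exists because the elements $T,j_1,\ldots,j_{k-1}$ are pairwise disjoint and $|T|+(k-1)\leq n<n+1=|[n]_0|$, which leaves room to define $\sigma$ arbitrarily on the remaining points. Then $B^\sigma\in\mathcal{B}_1^M$ and $B=(B^\sigma)^{\sigma^{-1}}$ is the required representation, so $f$ can be defined as above. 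Well-definedness is then immediate: if $B=A_1^{\pi_1}=A_2^{\pi_2}$ with $A_1,A_2\in\mathcal{B}_1^M$, the preceding proposition forces $A_1=A_2$, hence $f^M(A_1)=f^M(A_2)$. Finally, property $(\dagger)$ guarantees that $A^\pi\in\mathcal{B}_1$ for every $A\in\mathcal{B}_1^M$ and every $\pi$, so $f$ is genuinely a map $\mathcal{B}_1\to[m]$; taking $\pi$ to be the identity shows $f$ extends $f^M$, and $f(A^\pi)=f^M(A)$ holds by construction.

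The only delicate part will be the bookkeeping in the middle steps: matching the ``generators with indices'' presentation of $\mathcal{B}_1$ to the subchains of $M$ precisely, and confirming that the inequality $|T|+(k-1)\leq n$ does leave enough room to build the permutation $\sigma$. Everything else is formal, resting on the proposition proved above for well-definedness and on $(\dagger)$ for the closure of $\mathcal{B}_1$ under the permutation action.
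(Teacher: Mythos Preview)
Your argument is correct and follows precisely the line the paper intends: the paper states this as an immediate corollary of the preceding proposition and gives no proof, leaving implicit exactly the two points you spell out---that every $B\in\mathcal{B}_1$ lies in some $M^\pi$ (hence $B^{\pi^{-1}}\in\mathcal{B}_1^M$), and that the preceding proposition guarantees well-definedness of the resulting assignment. Your explicit construction of $\sigma$ is a bit more hands-on than necessary (one can simply note that every maximal simplex of $C_1$ is of the form $M^\pi$, so any $B\in\mathcal{B}_1\subseteq C_1$ sits inside some $M^\pi$), but the content is the same.
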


Let $\delta_n$ be the chromatic number of the $n$-dimensional permutoassociahedron and recall that $\alpha_{n-1}$ is the chromatic number of the $(n-1)$-dimensional associahedron. Then we have the following.

\begin{thm}
$\delta_n=\alpha_{n-1}+1$
\end{thm}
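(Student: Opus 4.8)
The plan is to prove the two inequalities $\delta_n \leq \alpha_{n-1}+1$ and $\delta_n \geq \alpha_{n-1}+1$ separately, reducing everything to the combinatorics of $\mathcal{B}_1^M$ via Proposition~\ref{lokalizacija} and Corollary~\ref{ekstenzija}. The key observation is that a set in $\mathcal{B}_1^M$ is determined by choosing a ``window'' in the chain $M$, i.e.\ a pair of nested subsets $\{n-1\}\subseteq\{n-1,n-2\}\subseteq\cdots\subseteq[n]_0$, together with the bottom element of the chain segment; concretely, a member of $\mathcal{B}_1^M$ is a block of consecutive sets from $M$, and hence is naturally indexed by a pair $1\leq k\leq k+l\leq n$ (the ``$k$'' and ``$k+l$'' of the definition of $\mathcal{B}_1$), the smallest being $\{\{n-1\}\}=\{\{n-1,\ldots,n-k\}\}$-type singletons. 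Two such members $A_1,A_2$ lie in a common $1$-nested set precisely when one of the corresponding segment-intervals is nested inside or disjoint-and-mergeable with the other; I would make the bijection between $\mathcal{B}_1^M$-membership and segments of $[n-1]_0$ explicit, so that ``proper colouring of $\mathcal{B}_1^M$'' becomes literally ``proper colouring of the $(n-1)$-dimensional associahedron'' in the sense of Section~\ref{asoc}, with exactly one extra facet (the top set $[n]_0$ itself / the full chain $M$) that is comparable to everything and therefore needs its own fresh colour.

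For the upper bound $\delta_n\leq\alpha_{n-1}+1$: take an optimal proper colouring of the $(n-1)$-associahedron with $\alpha_{n-1}$ colours (it exists by Theorem~\ref{teo1} / Lemma~\ref{lem0}), transport it along the bijection above to colour all members of $\mathcal{B}_1^M$ except the top one, assign colour $\alpha_{n-1}+1$ to the top member, check this is a proper colouring $f^M$ of $\mathcal{B}_1^M$, extend it by Corollary~\ref{ekstenzija} to a permutation-invariant $f\colon\mathcal{B}_1\to[\alpha_{n-1}+1]$, and invoke Proposition~\ref{lokalizacija} to conclude $f$ is proper; this colours the facets of $PA_n$ with $\alpha_{n-1}+1$ colours.

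For the lower bound $\delta_n\geq\alpha_{n-1}+1$: here I would argue that \emph{any} proper colouring of $PA_n$ restricts to a proper colouring of $\mathcal{B}_1^M$ (facets whose labels all lie inside the simplex $M$ form a copy of the face structure governing $\mathcal{B}_1^M$, and separatedness is inherited), that the $\alpha_{n-1}$ segment-type members require at least $\alpha_{n-1}$ colours by Theorem~\ref{teo1}, and that the top member $M$ is comparable (in the $1$-nested sense) to \emph{every} other member of $\mathcal{B}_1^M$ — because adjoining $[n]_0$ to any chain keeps it a chain, so any antichain involving it is trivial — hence $M$ must receive a colour used by none of the other members, giving at least $\alpha_{n-1}+1$. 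Combining the two bounds yields the theorem.

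The main obstacle I anticipate is the bottom step of the upper-bound argument, namely verifying rigorously that the bijection between $\mathcal{B}_1^M\setminus\{M\}$ and segments of $[n-1]_0$ sends the ``common $1$-nested set'' relation \emph{exactly} to the ``non-separated in the $(n-1)$-associahedron'' relation of Proposition~\ref{tv1}; one has to unwind the definition of $1$-nested set (union of every antichain must lie in $C_1-\mathcal{B}_1$) and match it against the associahedron's condition ``incomparable and union in the building set'' from Remark~\ref{rem3}. The edge cases — the singleton members of $\mathcal{B}_1^M$ corresponding to singleton facets of the associahedron, and keeping careful track of the index shift from $[n]_0$ down to $[n-1]_0$ — are where the bookkeeping is delicate, but conceptually everything is forced once the dictionary $A\in\mathcal{B}_1^M \leftrightarrow$ segment is set up correctly.
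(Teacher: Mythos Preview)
Your approach is correct and essentially matches the paper's: the upper bound via Proposition~\ref{lokalizacija} and Corollary~\ref{ekstenzija} together with the bijection $\mathcal{B}_1^M\setminus\{M\}\leftrightarrow$ segments of $[n-1]_0$ is exactly what the paper does (it phrases the bijection as ``rename each member of such a facet to its minimal element''), and the lower bound via restriction to the associahedral facet $M$ is the same idea, the paper just invoking directly that this facet \emph{is} an $(n-1)$-associahedron rather than re-deriving it combinatorially. One small slip to fix: the top member $M$ has largest set $\{n-1,\ldots,0\}$, not $[n]_0$, and the reason $M$ shares a $1$-nested set with every $B\in\mathcal{B}_1^M\setminus\{M\}$ is simply that $B\subset M$ (so $\{M,B\}$ has no antichain), not anything about adjoining $[n]_0$.
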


\begin{proof}
Let $f$ be a proper colouring of $PA_n$ and let $A$ be a facet of $PA_n$ that is an $(n-1)$-dimensional associahedron (such a facet exists by the definition of $\mathcal{B}_1$). It is easy to see that $f$ induces a proper colouring of the facets of $A$, from which one concludes that $$\delta_n\geq\alpha_{n-1}+1.$$ To
prove the equality, by Proposition~\ref{lokalizacija} and
Corollary~\ref{ekstenzija}, it suffices to show the existence of a proper
$f^M\colon\mathcal{B}_1^M\rightarrow[\alpha_{n-1}+1]$.

Let $f^M$ be defined so that $f^M(M)=\alpha_{n-1}+1$, and the remaining facets belonging to $\mathcal{B}_1^M$ are coloured
according to the procedure introduced in Section~\ref{asoc}. For the sake of simplicity, one can rename the members of these facets to their minimal elements (e.g.\ the set $\{n-1,\ldots,n-k\}$ is renamed to
$n-k$) and precede exactly as in Section~\ref{asoc} in order to define a proper function from $\mathcal{B}_1^M-\{M\}$
to $[\alpha_{n-1}]$.
\end{proof}

\begin{figure}[h]
    \centering
\includegraphics[width=0.4\textwidth]{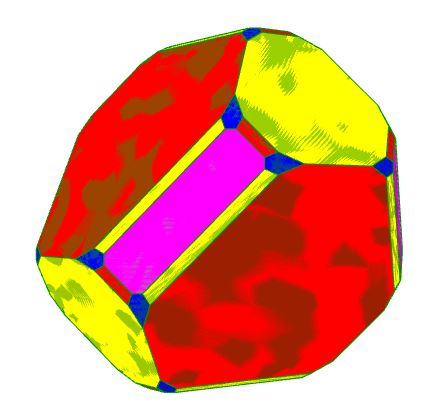}
    \caption{A proper colouring of the 3-dimensional permutoassociahedron}
    \label{obojeniperasoc}
\end{figure}

An immediate corollary of the above result and Theorem \ref{asi} is the following.

\begin{cor}\label{cor2} When $n$ tends to infinity, we have that $\delta_n \sim n \ln n$.
\end{cor}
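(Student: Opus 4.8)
The plan is to combine the just-proved identity $\delta_n=\alpha_{n-1}+1$ with the asymptotic estimate of Theorem~\ref{asi}, and then observe that neither the index shift $n\mapsto n-1$ nor the additive constant $1$ affects the leading-order behaviour. Concretely, I would start from $\delta_n=\alpha_{n-1}+1$ and divide by $n\ln n$, writing
\[
\frac{\delta_n}{n\ln n}=\frac{\alpha_{n-1}}{n\ln n}+\frac{1}{n\ln n}.
\]
The second term tends to $0$, so it remains to show $\dfrac{\alpha_{n-1}}{n\ln n}\to 1$.

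For that I would factor $\dfrac{\alpha_{n-1}}{n\ln n}=\dfrac{\alpha_{n-1}}{(n-1)\ln(n-1)}\cdot\dfrac{(n-1)\ln(n-1)}{n\ln n}$. By Theorem~\ref{asi} (applied with $m=n-1\to\infty$) the first factor tends to $1$. The second factor equals $\dfrac{n-1}{n}\cdot\dfrac{\ln(n-1)}{\ln n}$, and both of these sub-factors tend to $1$ as $n\to\infty$ (for the logarithmic ratio one may write $\ln(n-1)=\ln n+\ln(1-1/n)=\ln n+o(1)$). Hence $\dfrac{\alpha_{n-1}}{n\ln n}\to 1$, and therefore $\dfrac{\delta_n}{n\ln n}\to 1$, i.e. $\delta_n\sim n\ln n$.

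There is essentially no obstacle here: the argument is a routine limit manipulation, and the only point worth stating explicitly is that passing from $\alpha_{n-1}$ to an expression in $n$ is harmless because $(n-1)\ln(n-1)\sim n\ln n$. If one prefers, the whole thing can be packaged via the elementary inequalities~(\ref{ineq1}) and~(\ref{ineq2}) together with~(\ref{eq3}) exactly as in the proof of Theorem~\ref{asi}, but invoking Theorem~\ref{asi} directly together with $\delta_n=\alpha_{n-1}+1$ is the shortest route.
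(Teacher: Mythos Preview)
Your argument is correct and follows exactly the route the paper indicates: the paper states this as an immediate corollary of $\delta_n=\alpha_{n-1}+1$ and Theorem~\ref{asi}, and your write-up simply spells out the routine verification that the index shift and the $+1$ do not affect the asymptotics. Nothing needs to be added or changed.
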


\begin{center}\textmd{\textbf{Acknowledgements} }
\end{center}

\medskip
The authors are grateful to the anonymous referees for their helpful comments and valuable insights that improved considerably the quality of the manuscript. The authors were partially supported by the Ministry of Science, Technological Development and Innovation of the Republic of Serbia through the institutional funding of the Mathematical Institute SANU and the Faculty of Architecture, University of Belgrade.
The third author was supported by the Science Fund of the Republic of Serbia, Grant No. 7749891, Graphical Languages - GWORDS.

\begin{center}\textmd{\textbf{Data availability} }
\end{center}

\medskip
Data sharing not applicable to this article as no datasets were generated or analyzed during the current study.

\begin{center}\textmd{\textbf{Code availability} }
\end{center}

\medskip
Not applicable.

\begin{center}\textmd{\textbf{Conflict of interest} }
\end{center}

\medskip
The authors have no relevant financial or non-financial interests to disclose.

\end{document}